\numberwithin{equation}{section}
\newtheorem{definition}{Definition}[section]
\newtheorem{theorem}{Theorem}[section]
\newtheorem{lemma}{Lemma}[section]
\newtheorem{proposition}{Proposition}[section]
\newtheorem{corollary}{Corollary}[section]
\newtheorem{example}{Example}[section]
\title{\bf Stochastic $R_0$ Tensors to Stochastic Tensor Complementarity Problems}
\author{
Maolin Che\thanks{E-mail: chncml@outlook.com and cheml@swufe.edu.cn. School of
Economic Mathematics, Southwest University of Finance and Economics, Chengdu, 611130, P. R. of
China.}
\and
Liqun Qi\thanks{E-mail: liqun.qi@polyu.edu.hk.
 Department of Applied Mathematics, the Hong Kong Polytechnic University, Hong Kong.
E-mail: liqun.qi@polyu.edu.hk. This author is supported by the Hong Kong Research Grant Council (Grant No.
PolyU 501913, 15302114, 15300715 and 15301716).}\and Yimin Wei\thanks{
E-mail: ymwei@fudan.edu.cn and yimin.wei@gmail.com. School of
Mathematical Sciences and Shanghai Key Laboratory of Contemporary
Applied Mathematics, Fudan University, Shanghai, 200433, P. R. of
China. This author is supported by the National Natural Science
Foundation of China under grant 11771099 and   Shanghai Municipal Education Commission.}
}
\date{}
\begin{document}
\maketitle
\begin{abstract}
The main purpose of this paper is devoted to an introduction of the stochastic tensor complementarity problem. We consider the expected residual minimization formulation of the stochastic tensor complementarity problem. We show that the solution set of the expected residual minimization problem is nonempty and bounded, if the associated tensor is an $R_0$ tensor. We also prove that the associated tensor being a stochastic $R_0$ tensor is a necessary and sufficient condition for the solution set of the expected residual minimization problem to be nonempty and bounded.
  \bigskip

  {\bf Keywords:} Stochastic tensor complementarity problems; tensor complementarity problem; $R_0$ tensors; stochastic $R_0$ tensors; the expected residual minimization formulation.

  \bigskip

  {\bf AMS subject classifications:} 15A18, 15A69, 65F15, 65F10
\end{abstract}
\section{Introduction}
Over the past decade, the research of finite-dimensional variational inequalities and complementarity problems \cite{Jacobians_ncp,ncpv1,NCP,NCPalgorithm} has been rapidly developed in the theory of existence, uniqueness and sensitivity of solutions, theory of algorithms, and the application of these techniques to transportation planning, regional science, socio-economic analysis, energy modeling and game theory. In many practical applications, complementarity problems
often involve uncertain data. However, references on stochastic complementarity
problems \cite{CWZ12,SNCP_introduction1,SNCP_introduction2,SNCP_introduction3,ZC14} are relatively scarce, compared with stochastic optimization problems for which abundant results are available in the literature; see \cite{SNCP_introduction4,SNCP_introduction5} in particular for simulation-based approaches in stochastic optimization.

The tensor complementarity problem (TCP), which is a natural generalization of the linear complementarity problem (LCP) and a special case of the nonlinear complementarity problem (NCP), is a new topic emerged from the tensor community, inspired by the growing research on structured tensors. The TCP is widely used in nonlinear compressed sensing, commutations, DNA microarrays, multi-person game and so on. The readers can be recommended \cite{tcp1,tcp2,DING2018336,shouqiangtensor,tcp7,tcp3,liu2017tensor,tcp4,tcp5,tcp6,wang2016exceptionally} for a thorough survey of the existence of the solution of the TCP problem. The interested readers can refer to \cite{shouqiangtensor,han2018continuation,tcp3,liu2017tensor,wang2018gradient,xie2017iterative} for numerical algorithms for tensor complementarity problems.

Song and Qi \cite{tcp5} introduced the definition of $R_0$ tensors and considered the solvability of the tensor complementarity problem with $R_0$ tensors. The linear complementarity problem with stochastic $R_0$ matrices are investigated in \cite{slcp_fang_SJO_2007}. In this paper, we study the stochastic tensor complementarity problem (STCP) with any subset $\Omega\subset\mathbb{R}^I$, which can be viewed as the generalization of the stochastic linear complementarity problem to the tensor case. If $\Omega$ only contains a single realization, then the STCP reduced to the standard TCP. In many potential applications, some data in the TCP cannot be known with certainty. The STCP is aimed at a practical treatment of the TCP under uncertainty. However, only a little attention has been paid to the STCP in the literature. We make the following contributions:
\begin{enumerate}[(1)]
\item We  present the formula for the stochastic tensor complementarity problem (STCP) and study the expected residual minimization formulations for the STCP, which employs an NCP function. In order to study the STCP, we introduce the definition of stochastic $R_0$ tensors.
\item We show that a sufficient condition for the existence of a solution to the expected residual minimization problem and its discrete approximations is that there is an observation $\bm{\omega}$ such that the coefficient tensor $\mathcal{A}(\bm{\omega})$ is an $R_0$ tensor.
\item We prove that the involved tensor being a stochastic $R_0$ tensor is a necessary and sufficient condition for the solution set of the expected residual minimization problem to be nonempty and bounded.
\end{enumerate}

Throughout this paper, the norm $\|\cdot\|_2$ denotes the Euclidean norm and we use $\mathbb{R}^I_+=\{\mathbf{x}\in\mathbb{R}^I:x_i\geq0,i=1,2,\dots,I\}$ to denote the nonnegative orthant. For a given vector $\mathbf{x}\in\mathbb{R}^I$, we denote $\mathbb{I}(\mathbf{x})=\{i:x_i=0,i=1,2,\dots,I\}$ and $\mathbb{J}(\mathbf{x})=\{i:x_i\neq0,i=1,2,\dots,I\}$.
For given two vectors $\mathbf{x},\mathbf{y}\in\mathbb{R}^I$, $\min(\mathbf{x},\mathbf{y})$ denotes the vector with components $\min(x_i,y_i)$ for all $i=1,2,\dots,I$. We use $\mathbf{0}_I$ and $\mathbf{I}_I$ to denote the zero vector in $\mathbb{R}^I$ and the identity matrix in $\mathbb{R}^{I\times I}$, respectively.

The following notations will be adopted. We assume that $I$, $J$, and $N$ will be reserved to denote the index upper bounds, unless stated otherwise. We use small letters $x,u,v,\dots$ for scalars, small bold letters $\mathbf{x},\mathbf{u}, \mathbf{v},\dots$ for vectors, bold capital letters $\mathbf{A},\mathbf{B},\mathbf{C},\dots$ for matrices, and calligraphic letters $\mathcal{A},\mathcal{B},\mathcal{C},\dots$ for higher-order tensors. This notation is consistently used for lower-order parts of a given structure. For example, the entry with row index $i$ and column index $j$ in a matrix ${\bf A}$, i.e., $({\bf A})_{ij}$, is symbolized by $a_{ij}$ (also $(\mathbf{x})_i=x_i$ and $(\mathcal{A})_{i_1i_2 \dots i_N}=a_{i_1i_2\dots i_N}$).

The remainder of this paper is organized as follows. In Section \ref{STCP:section2}, we introduce the stochastic tensor complementarity problem and formulate its expected residual minimization. In Section \ref{STCP:section3}, we introduce the definition of $R_0$ tensors and present a sufficient condition for the nonemptieness of the solution set of the expected residual minimization problem associated with a given stochastic tensor complementarity problem. In Section \ref{STCP:section4}, we provide the definition of a stochastic  $R_0$ tensor and derive a sufficient and necessary condition for the nonemptieness of the solution set of this expected residual minimization problem. We conclude our paper with some remarks about the STCP in Section \ref{STCP:section5}.
\section{Stochastic tensor complementarity problems}\label{STCP:section2}
The set of all $N$th order $I$-dimensional real tensors is denoted by $T_{N,I}$. We first introduce two denotations \cite{Qiliqun} as follows. For any $\mathcal{A}\in T_{N,I}$ and $\mathbf{x}\in\mathbb{R}^I$, $\mathcal{A}\mathbf{x}^{N-1}$ is an $I$-dimensional real vector whose $i$th component is
$(\mathcal{A}\mathbf{x}^{N-1})_i=\sum_{i_2,\dots,i_N=1}^Ia_{ii_2\dots i_N}x_{i_2}\dots x_{i_N}$,
and $\mathcal{A}\mathbf{x}^N$ is a scalar given by
$\mathcal{A}\mathbf{x}^{N}=\sum_{i_1,i_2,\dots,i_N=1}^Ia_{i_1i_2\dots i_N}x_{i_1}x_{i_2}\dots x_{i_N}$.
The Frobenius norm of $\mathcal{A}$ is given by $\|\mathcal{A}\|_{F}=\sqrt{\langle\mathcal{A},\mathcal{A}\rangle}$ and the scalar product $\langle\mathcal{A},\mathcal{B}\rangle$ is defined by  $\langle\mathcal{A},\mathcal{B}\rangle=\sum_{i_{1},i_2,\dots,i_{N}=1}^{I}b_{i_{1}i_{2}\dots i_{N}}a_{i_{1}i_{2}\dots i_{N}}$ \cite{nonnegative,Kolda}.

For a given $\mathcal{A}\in T_{N,I}$ and $\mathbf{q}\in\mathbb{R}^I$, the tensor complementarity problem, denoted by ${\rm TCP}(\mathcal{A},\mathbf{q})$, is to find $\mathbf{x}\in\mathbb{R}^I_+$ such that
$$\mathcal{A}\mathbf{x}^{N-1}+\mathbf{q}\in\mathbb{R}^I_+,\quad \mathcal{A}\mathbf{x}^{N}+\mathbf{x}^\top\mathbf{q}=0,$$
or to show that no such vector exists. Let $(\Omega,\mathfrak{F},P)$ be a probability space, where $\Omega$ is a subset of $\mathbb{R}^I$, and $\mathfrak{F}$ is a $\sigma$-algebra generated by $\{\Omega\cap\mathbb{U}:\mathbb{U}\text{ is an open set in }\mathbb{R}^I\}$. For given $\mathcal{A}(\bm{\omega})\in T_{N,I}$ and $\mathbf{q}(\bm{\omega})\in\mathbb{R}^{I}$ with $\bm{\omega}\in\Omega$, the stochastic tensor complementarity problem (STCP), denoted by ${\rm STCP}(\mathcal{A}(\bm{\omega}),\mathbf{q}(\bm{\omega}))$, is to find $\mathbf{x}\in\mathbb{R}^I_+$ such that
\begin{equation*}
\mathcal{A}(\bm{\omega})\mathbf{x}^{N-1}+\mathbf{q}(\bm{\omega})\in\mathbb{R}^I_+,\quad \mathcal{A}(\bm{\omega})\mathbf{x}^N+\mathbf{x}^\top\mathbf{q}(\bm{\omega})=0,
\end{equation*}
or to show that no such vector exists.

When $N=2$, the STCP reduces to the stochastic linear problem (SLCP):
\begin{equation*}
\mathbf{x}\in\mathbb{R}^I_+,\quad \mathbf{A}(\bm{\omega})\mathbf{x}+\mathbf{q}(\bm{\omega})\in\mathbb{R}^I_+,\quad \mathbf{x}^\top\mathbf{A}(\bm{\omega})\mathbf{x}+\mathbf{x}^\top\mathbf{q}(\bm{\omega})=0,
\end{equation*}
where $\mathbf{A}(\bm{\omega})\in \mathbb{R}^{I\times I}$ and $\mathbf{q}(\bm{\omega})\in\mathbb{R}^{I}$ for $\bm{\omega}\in\Omega$. The STCP is a special case of the stochastic nonlinear complementarity problem (SNCP) \cite{CZF09,LCF07,LCF09,ZC08,ZC09}
\begin{equation*}
\mathbf{x}\in\mathbb{R}^I_+,\quad F(\mathbf{x},\bm{\omega})\in\mathbb{R}^I_+,\quad \mathbf{x}^\top F(\mathbf{x},\bm{\omega})=0,
\end{equation*}
where $F(\mathbf{x},\bm{\omega}):\mathbb{R}^I\times \Omega\rightarrow\mathbb{R}^I$ and $\mathbf{q}(\bm{\omega})\in\mathbb{R}^{I}$ for $\bm{\omega}\in\Omega$.

Throughout this paper, we assume that $\mathcal{A}(\bm{\omega})$ and $\mathbf{q}(\bm{\omega})$ are measured functions of $\bm{\omega}$ with the following property:
\begin{equation*}
\mathbb{E}\{\|\mathcal{A}(\bm{\omega})\|_F\}<\infty,\quad
\mathbb{E}\{\|\mathbf{q}(\bm{\omega})\|_2\}<\infty,
\end{equation*}
where $\mathbb{E}\{\cdot\}$ stands for the expectation with respect to $\bm{\omega}$.

Similar to the case of general nonlinear complementarity problems, there does not exist $\mathbf{x}\in\mathbb{R}^I_+$ satisfying the ${\rm STCP}(\mathcal{A}(\bm{\omega}),\mathbf{q}(\bm{\omega}))$ for almost all $\bm{\omega}\in\Omega$. A deterministic formulation for the STCP provides a decision vector which is optimal in a certain sense. Different deterministic formulations may yield different solutions that are optimal in different senses.

G\"{u}rkan, \"{O}zge, and Robinson \cite{SNCP_introduction2} considered the sample-path approach for stochastic variational inequalities and provided convergence theory and applications for the approach. When it applied to the ${\rm STCP}(\mathcal{A}(\bm{\omega}),\mathbf{q}(\bm{\omega}))$, the approach is the same as the {\it expected value} (EV) method, which uses the expected function of the random function $\mathcal{A}(\bm{\omega})\mathbf{x}^{N-1}+\mathbf{q}(\bm{\omega})$ and solves the deterministic problem
\begin{equation*}
\mathbf{x}\in\mathbb{R}^I_+,\quad \mathbb{E}\{\mathcal{A}(\bm{\omega})\mathbf{x}^{N-1}+\mathbf{q}(\bm{\omega})\}
\in\mathbb{R}^I_+,\quad \mathbb{E}\{\mathcal{A}(\bm{\omega})\mathbf{x}^N
+\mathbf{x}^\top\mathbf{q}(\bm{\omega})\}=0.
\end{equation*}

 Chen and Fukushima \cite{slcp_chen_MOR_2005} proposed a deterministic formulation called the {\it expected residual minimization} (ERM) method, which is to find a vector $\mathbf{x}\in\mathbb{R}^I$ that minimizes the expected residual of the stochastic linear complementarity problem. Similarly, we can also propose a deterministic formulation called the {\it expected residual minimization} (ERM) method, which is to find a vector $\mathbf{x}\in\mathbb{R}^I$ that minimizes the expected residual of the ${\rm STCP}(\mathcal{A}(\bm{\omega}),\mathbf{q}(\bm{\omega}))$, i.e.,
\begin{equation}\label{STCP:equation1}
\min_{\mathbf{x}\in\mathbb{R}^I_+}
\mathbb{E}\{\|\Phi(\mathbf{x},\bm{\omega})\|_2^2\},
\end{equation}
where $\Phi:\mathbb{R}^I\times \Omega\rightarrow\mathbb{R}^{I}$ is defined by
\begin{equation*}
\Phi(\mathbf{x},\bm{\omega})=\begin{pmatrix}
\phi((\mathcal{A}(\bm{\omega})\mathbf{x}^{N-1})_1+(\mathbf{q}(\bm{\omega}))_1,x_1)\\
\phi((\mathcal{A}(\bm{\omega})\mathbf{x}^{N-1})_2+(\mathbf{q}(\bm{\omega}))_2,x_2)\\
\vdots\\
\phi((\mathcal{A}(\bm{\omega})\mathbf{x}^{N-1})_I+(\mathbf{q}(\bm{\omega}))_I,x_I)
\end{pmatrix}.
\end{equation*}
Here $\phi:\mathbb{R}^2\rightarrow\mathbb{R}$ is an NCP function which has the property,
\begin{equation*}
\phi(a,b)=0\Leftrightarrow a\geq0,\quad b\geq0,\quad ab=0.
\end{equation*}

For the case of $N=2$, if $\Omega$ has only one realization, then the ERM problem (\ref{STCP:equation1}) associated with an STCP reduces to the standard LCP and the solvability of (\ref{STCP:equation1}) does not depend on the choice of NCP functions. Example 1 in \cite{slcp_chen_MOR_2005} can show that the solution set of ${\rm STCP}(\mathcal{A}(\cdot),\mathbf{q}(\cdot))$ being nonempty and bounded for some $\overline{\bm{\omega}}\in\Omega$ does not imply that the ${\rm ERM}(\mathcal{A}(\cdot),\mathbf{q}(\cdot))$ has a solution.

Various NCP functions have been studied for solving complementarity problems \cite{ncpv1}. In this paper, we will concentrate on the ``min'' function
$\phi(a,b)=\min(a,b)$,
and the Fischer-Burmeister (FB) function \cite{fb_function},
$\phi(a,b)=a+b-\sqrt{a^2+b^2}.$

All NCP functions including the ``$\min$" function and FB function are equivalent in the sense that they can reformulate any complementarity problem as a system of nonlinear equations having the same solution set. Moreover, some NCP functions have the same growth rate. In particular, Tseng \cite{meritfun_tseng_jota_1996} showed that the ``min'' function and the FB function satisfy
\begin{equation}\label{STCP:equation2}
\frac{2}{\sqrt{2}+2}|\min(a,b)|\leq|a+b-\sqrt{a^2+b^2}|\leq(\sqrt{2}+2)|\min(a,b)|
\end{equation}
where $a,b\in\mathbb{R}$. We use $\Phi_1(\mathbf{x},\bm{\omega})$ and $\Phi_2(\mathbf{x},\bm{\omega})$ to distinguish the function $\Phi(\mathbf{x},\bm{\omega})$ defined by the ``min'' function and the FB function, respectively.

Let ${\rm ERM}(\mathcal{A}(\cdot),\mathbf{q}(\cdot))$ denote (\ref{STCP:equation1}) and define
\begin{equation}\label{STCP:equation4}
G(\mathbf{x})=\int_{\Omega}\|\Phi(\mathbf{x},\bm{\omega})\|_2^2dF(\bm{\omega})
=\int_{\Omega}\|\Phi(\mathbf{x},\bm{\omega})\|_2^2f(\bm{\omega})d\bm{\omega},
\end{equation}
where $F(\bm{\omega})$ is the distribution function of $\bm{\omega}$ and $f(\bm{\omega})$ is its continuous probability density function. Both $F(\bm{\omega})$ and $f(\bm{\omega})$ satisfy
$\int_{\Omega}dF(\bm{\omega})=\int_{\Omega}f(\bm{\omega})d\bm{\omega}=1$.
We call $\mathbf{x}_*\in\mathbb{R}^I_+$ a {\it local solution} of the ${\rm ERM}(\mathcal{A}(\cdot),\mathbf{q}(\cdot))$, if there is $\gamma>0$ such that $G(\mathbf{x})\geq G(\mathbf{x}_*)$ for all $\mathbf{x}\in\mathbb{R}^I_+\cap\mathbb{B}(\mathbf{x}_*,\gamma)$, where $\mathbb{B}(\mathbf{x}_*,\gamma)
=\{\mathbf{x}\in\mathbb{R}^I:\|\mathbf{x}-\mathbf{x}_*\|_2\leq\gamma\}$, and call $\mathbf{x}_*$ a {\it global solution} of the ${\rm ERM}(\mathcal{A}(\cdot),\mathbf{q}(\cdot))$, if $G(\mathbf{x})\geq G(\mathbf{x}_*)$ for all $\mathbf{x}\in\mathbb{R}^I_+$. Then ${\rm ERM}(\mathcal{A}(\cdot),\mathbf{q}(\cdot))$ can be rewritten as
\begin{equation*}
\min\quad G(\mathbf{x})\quad\quad\text{subject to}\quad \mathbf{x}\in\mathbb{R}^I_+.
\end{equation*}
Denote two sets of function $G(\mathbf{x})$ by
\begin{equation*}
\mathbb{D}(\gamma):=\{\mathbf{x}\in\mathbb{R}^I_+:G(\mathbf{x})\leq\gamma\},\quad
\mathbb{D}^{C}(\gamma):=\{\mathbf{x}\in\mathbb{R}^I_+:G(\mathbf{x})\geq\gamma\}.
\end{equation*}
\section{$R_0$ tensors for STCPs}\label{STCP:section3}

\subsection{A property of $R_0$ tensors}
Recall that $\mathbf{A}\in \mathbb{R}^{I\times I}$ is called an $R_0$ matrix  \cite[Definition 3.9.20]{lcp}, if
\begin{equation*}
\mathbf{x}\in\mathbb{R}^I_+,\quad \mathbf{A}\mathbf{x}\in\mathbb{R}^I_+,\quad
\mathbf{x}^\top\mathbf{A}\mathbf{x}=0\quad
\Rightarrow\quad \mathbf{x}=\mathbf{0}_I.
\end{equation*}
Similarly, an $R_0$ tensor is formally defined as follows, which is quoted from \cite[Definition 2.1]{song2016properties}.
\begin{definition}
$\mathcal{A}\in T_{N,I}$ is called an $R_0$ tensor, if
\begin{equation*}
\mathbf{x}\in\mathbb{R}^I_+,\quad \mathcal{A}\mathbf{x}^{N-1}\in\mathbb{R}^I_+,\quad
\mathcal{A}\mathbf{x}^{N}=0\quad
\Rightarrow\quad \mathbf{x}=\mathbf{0}_I.
\end{equation*}
\end{definition}
The following lemma will be used to study the solution set of ${\rm ERM}(\mathcal{A}(\cdot),\mathbf{q}(\cdot))$.
\begin{lemma}\label{STCP:lemma1}
If $\mathcal{A}(\overline{\bm{\omega}})\in T_{N,I}$ is an $R_0$ tensor for some $\overline{\bm{\omega}}\in\Omega$, then there is a closed sphere $\mathbb{B}(\overline{\bm{\omega}},\delta)=\{\bm{\omega}\in\Omega:
\|\bm{\omega}-\overline{\bm{\omega}}\|_2\leq\delta\}$ with $\delta>0$ such that for every $\bm{\omega}\in\overline{\mathbb{B}}:=\mathbb{B}(\overline{\bm{\omega}},\delta)\cap\Omega$, $\mathcal{A}(\bm{\omega})$ is an $R_0$ tensor.
\end{lemma}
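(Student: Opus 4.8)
The plan is to argue by contradiction, exploiting the positive homogeneity of the maps $\mathbf{x}\mapsto\mathcal{A}\mathbf{x}^{N-1}$ and $\mathbf{x}\mapsto\mathcal{A}\mathbf{x}^{N}$ together with a compactness argument on the unit sphere. First I would suppose the conclusion fails, so that no admissible $\delta$ exists. Then for every positive integer $k$ the set $\mathbb{B}(\overline{\bm{\omega}},1/k)\cap\Omega$ must contain a point $\bm{\omega}_k$ for which $\mathcal{A}(\bm{\omega}_k)$ is \emph{not} an $R_0$ tensor. Unfolding the definition of an $R_0$ tensor, this failure supplies a witness: a nonzero vector $\mathbf{x}_k\in\mathbb{R}^I_+$ satisfying $\mathcal{A}(\bm{\omega}_k)\mathbf{x}_k^{N-1}\in\mathbb{R}^I_+$ and $\mathcal{A}(\bm{\omega}_k)\mathbf{x}_k^{N}=0$. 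By construction $\bm{\omega}_k\to\overline{\bm{\omega}}$.

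Next I would normalize the witnesses. Since the two tensor operations are positively homogeneous, of degrees $N-1$ and $N$ respectively, replacing $\mathbf{x}_k$ by $\mathbf{y}_k:=\mathbf{x}_k/\|\mathbf{x}_k\|_2$ keeps $\mathbf{y}_k\in\mathbb{R}^I_+$, keeps $\mathcal{A}(\bm{\omega}_k)\mathbf{y}_k^{N-1}\in\mathbb{R}^I_+$ (it is a positive multiple of the original vector), and keeps $\mathcal{A}(\bm{\omega}_k)\mathbf{y}_k^{N}=0$. Each $\mathbf{y}_k$ now lies in the compact set $\{\mathbf{y}\in\mathbb{R}^I_+:\|\mathbf{y}\|_2=1\}$, so after passing to a subsequence I may assume $\mathbf{y}_k\to\mathbf{y}_*$ with $\mathbf{y}_*\in\mathbb{R}^I_+$ and $\|\mathbf{y}_*\|_2=1$; in particular $\mathbf{y}_*\neq\mathbf{0}_I$.

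Then I would pass to the limit. Using continuity of $\bm{\omega}\mapsto\mathcal{A}(\bm{\omega})$ at $\overline{\bm{\omega}}$ together with the joint continuity of the multilinear maps $(\mathcal{A},\mathbf{y})\mapsto\mathcal{A}\mathbf{y}^{N-1}$ and $(\mathcal{A},\mathbf{y})\mapsto\mathcal{A}\mathbf{y}^{N}$, I obtain $\mathcal{A}(\bm{\omega}_k)\mathbf{y}_k^{N-1}\to\mathcal{A}(\overline{\bm{\omega}})\mathbf{y}_*^{N-1}$ and $\mathcal{A}(\bm{\omega}_k)\mathbf{y}_k^{N}\to\mathcal{A}(\overline{\bm{\omega}})\mathbf{y}_*^{N}$. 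Because $\mathbb{R}^I_+$ is closed, the limit obeys $\mathcal{A}(\overline{\bm{\omega}})\mathbf{y}_*^{N-1}\in\mathbb{R}^I_+$ and $\mathcal{A}(\overline{\bm{\omega}})\mathbf{y}_*^{N}=0$, with $\mathbf{y}_*\in\mathbb{R}^I_+$ nonzero. This contradicts the hypothesis that $\mathcal{A}(\overline{\bm{\omega}})$ is an $R_0$ tensor, which forces the witness to be $\mathbf{0}_I$. Hence the desired $\delta>0$ exists.

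The step I expect to be the main obstacle is the limit passage, and specifically it rests on continuity of the tensor-valued map $\bm{\omega}\mapsto\mathcal{A}(\bm{\omega})$ near $\overline{\bm{\omega}}$: the measurability assumption alone is insufficient, so I would make this continuity explicit, as it is the natural regularity condition in this stochastic setting. Everything else is routine once homogeneity and compactness are invoked; the normalization is what guarantees the cluster point $\mathbf{y}_*$ is nonzero, and the closedness of $\mathbb{R}^I_+$ is what transfers the two defining conditions across the limit.
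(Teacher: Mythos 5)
Your proof is correct and follows essentially the same route as the paper's: contradiction, a sequence $\bm{\omega}_k\to\overline{\bm{\omega}}$ with nonzero witnesses, normalization to the unit sphere, compactness, and passage to the limit. Your explicit remark that continuity of $\bm{\omega}\mapsto\mathcal{A}(\bm{\omega})$ (not mere measurability) is what justifies the limit step is a point the paper's proof uses silently, so flagging it is a minor improvement rather than a divergence.
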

\begin{proof}
 Suppose that this lemma is not true. Then there is a sequence $\{\bm{\omega}_k\}\subset\overline{\mathbb{B}}$ such that
\begin{equation*}
\lim_{k\rightarrow\infty}\bm{\omega}_k=\overline{\bm{\omega}}
\end{equation*}
and, for every $\mathcal{A}(\bm{\omega}_k)$, we can find nonzero $\mathbf{x}_k\in\mathbb{R}^I$ satisfying
\begin{equation*}
\mathbf{x}_k\in\mathbb{R}^I_+,\quad \mathcal{A}(\bm{\omega}_k)\mathbf{x}_k^{N-1}\in\mathbb{R}^I_+,\quad
\mathcal{A}(\bm{\omega}_k)\mathbf{x}_k^{N}=0.
\end{equation*}
Let $\mathbf{v}_k=\frac{\mathbf{x}_k}{\|\mathbf{x}_k\|_2}$. Then we have
\begin{equation*}
\mathbf{v}_k\in\mathbb{R}^I_+,\quad \|\mathbf{v}_k\|_2=1,\quad \mathcal{A}(\bm{\omega}_k)\mathbf{v}_k^{N-1}\in\mathbb{R}^I_+,\quad
\mathcal{A}(\bm{\omega}_k)\mathbf{v}_k^{N}=0.
\end{equation*}
If $k\rightarrow\infty$, then we obtain a vector $\overline{\mathbf{v}}\in\mathbb{R}^I$ such that
\begin{equation*}
\overline{\mathbf{v}}\in\mathbb{R}^I_+,\quad \|\overline{\mathbf{v}}\|_2=1,\quad \mathcal{A}(\overline{\bm{\omega}})\overline{\mathbf{v}}^{N-1}\in\mathbb{R}^I_+,\quad
\mathcal{A}(\overline{\bm{\omega}})\overline{\mathbf{v}}^{N}=0.
\end{equation*}
This contradicts the assumption that $\mathcal{A}(\overline{\bm{\omega}})$ is an $R_0$ tensor.
\end{proof}
\subsection{A sufficient condition for ${\rm ERM}(\mathcal{A}(\cdot),\mathbf{q}(\cdot))$}
\begin{theorem}\label{STCP:theorem1}
Assume that there exists an $\overline{\bm{\omega}}\in\Omega$ such that $f(\overline{\bm{\omega}})>0$ and $\mathcal{A}(\overline{\bm{\omega}})$ is an $R_0$ tensor, where $f(\cdot)$ is the continuous probability function in {\rm (\ref{STCP:equation4})}. Then, for any number $\gamma>0$, the level set $\mathbb{D}(\gamma)$ is bounded, that is, $\mathbb{D}^{C}(\gamma)$ is unbounded.
\end{theorem}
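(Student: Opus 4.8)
The plan is to argue by contradiction, exploiting the homogeneity of the tensor map to reduce the behavior of $G$ at infinity to the behavior of the ``min'' residual along a single limiting direction, and then to convert a pointwise divergence into divergence of the integral via Fatou's lemma. Concretely, I would suppose $\mathbb{D}(\gamma)$ is unbounded and pick a sequence $\{\mathbf{x}_k\}\subset\mathbb{D}(\gamma)$ with $\|\mathbf{x}_k\|_2\to\infty$. Normalizing $\mathbf{v}_k=\mathbf{x}_k/\|\mathbf{x}_k\|_2$ and passing to a subsequence, I obtain $\mathbf{v}_k\to\overline{\mathbf{v}}$ with $\overline{\mathbf{v}}\in\mathbb{R}^I_+$ and $\|\overline{\mathbf{v}}\|_2=1$. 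The key structural fact is that $(\mathcal{A}(\bm{\omega})\mathbf{x}_k^{N-1})_i=\|\mathbf{x}_k\|_2^{N-1}(\mathcal{A}(\bm{\omega})\mathbf{v}_k^{N-1})_i$ and $x_{k,i}=\|\mathbf{x}_k\|_2 v_{k,i}$, so the two entries fed into each ``min'' are controlled by the limiting direction $\overline{\mathbf{v}}$ times a growing scale factor.

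First I would invoke Lemma \ref{STCP:lemma1} to get a radius $\delta>0$ with $\mathcal{A}(\bm{\omega})$ an $R_0$ tensor for every $\bm{\omega}\in\overline{\mathbb{B}}=\mathbb{B}(\overline{\bm{\omega}},\delta)\cap\Omega$; shrinking $\delta$ if needed and using the continuity of $f$ with $f(\overline{\bm{\omega}})>0$, I may also assume $f>0$ on $\overline{\mathbb{B}}$, so that $\int_{\overline{\mathbb{B}}}f(\bm{\omega})\,d\bm{\omega}>0$. Since the integrand defining $G$ in (\ref{STCP:equation4}) is nonnegative, $G(\mathbf{x}_k)\geq\int_{\overline{\mathbb{B}}}\|\Phi_1(\mathbf{x}_k,\bm{\omega})\|_2^2 f(\bm{\omega})\,d\bm{\omega}$, and it suffices to force this restricted integral to $+\infty$.

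The heart of the argument is the pointwise claim that $\|\Phi_1(\mathbf{x}_k,\bm{\omega})\|_2^2\to+\infty$ for each fixed $\bm{\omega}\in\overline{\mathbb{B}}$. Because $\mathcal{A}(\bm{\omega})$ is $R_0$ and $\overline{\mathbf{v}}\neq\mathbf{0}_I$, the three defining conditions cannot hold simultaneously for $\overline{\mathbf{v}}$, giving a dichotomy: either (i) some component $(\mathcal{A}(\bm{\omega})\overline{\mathbf{v}}^{N-1})_i<0$, or (ii) $\mathcal{A}(\bm{\omega})\overline{\mathbf{v}}^{N-1}\in\mathbb{R}^I_+$, whence $\mathcal{A}(\bm{\omega})\overline{\mathbf{v}}^N=\sum_i\overline{v}_i(\mathcal{A}(\bm{\omega})\overline{\mathbf{v}}^{N-1})_i>0$ and so some index has $\overline{v}_i>0$ and $(\mathcal{A}(\bm{\omega})\overline{\mathbf{v}}^{N-1})_i>0$. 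In case (i), at that index the first argument $\|\mathbf{x}_k\|_2^{N-1}(\mathcal{A}(\bm{\omega})\mathbf{v}_k^{N-1})_i+(\mathbf{q}(\bm{\omega}))_i\to-\infty$ (since $N\geq2$ and the coefficient is negative in the limit) while the second argument $\|\mathbf{x}_k\|_2 v_{k,i}\geq0$, so that entry of $\Phi_1$ tends to $-\infty$; in case (ii) both arguments tend to $+\infty$, so that entry tends to $+\infty$. Either way $\|\Phi_1(\mathbf{x}_k,\bm{\omega})\|_2^2\to+\infty$.

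Finally I would apply Fatou's lemma to the nonnegative functions $\|\Phi_1(\mathbf{x}_k,\cdot)\|_2^2 f(\cdot)$ over $\overline{\mathbb{B}}$: since the pointwise $\liminf$ equals $+\infty$ on the positive-measure set $\{f>0\}\cap\overline{\mathbb{B}}$, we obtain $\liminf_k G(\mathbf{x}_k)=+\infty$, contradicting $G(\mathbf{x}_k)\leq\gamma$. Hence $\mathbb{D}(\gamma)$ is bounded; consequently every $\mathbf{x}\in\mathbb{R}^I_+$ of sufficiently large norm satisfies $G(\mathbf{x})>\gamma$ and thus lies in $\mathbb{D}^{C}(\gamma)$, so $\mathbb{D}^{C}(\gamma)$ is unbounded. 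The FB case $\Phi_2$ follows at once from the two-sided comparison (\ref{STCP:equation2}), which makes $\|\Phi_2\|_2^2$ and $\|\Phi_1\|_2^2$ equivalent up to positive constants. I expect the main obstacle to be the passage from pointwise divergence to divergence of the integral: this is exactly where Fatou's lemma together with the secured positivity of $f$ on $\overline{\mathbb{B}}$ is essential, and where the $R_0$-driven dichotomy must be verified carefully for every $\bm{\omega}$ in the neighborhood rather than merely at $\overline{\bm{\omega}}$.
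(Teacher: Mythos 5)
Your proof is correct and follows essentially the same route as the paper's: invoke Lemma \ref{STCP:lemma1} to obtain a neighborhood $\overline{\mathbb{B}}$ on which every $\mathcal{A}(\bm{\omega})$ is an $R_0$ tensor and $f$ is bounded away from zero, normalize the unbounded sequence, and use the $R_0$-driven dichotomy at the limiting direction $\overline{\mathbf{v}}$ to force one component of the ``min'' residual to diverge. The only real difference is the final passage from residual blow-up to $G\to\infty$: the paper picks a minimizing $\bm{\omega}_k\in\overline{\mathbb{B}}$ and bounds $G(\mathbf{x}_k)\geq f_0C\|\Phi(\mathbf{x}_k,\bm{\omega}_k)\|_2^2$, whereas you prove pointwise divergence for each fixed $\bm{\omega}$ and integrate via Fatou's lemma --- an equally valid (and arguably cleaner) step, which also treats the $\|\mathbf{x}_k\|_2^{N-1}$ homogeneity and the strictness of the inequalities at the limit direction more carefully than the paper's write-up does.
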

\begin{proof}
By the continuity of $f$ and Lemma \ref{STCP:lemma1}, there exists a closed sphere $\mathbb{B}(\overline{\bm{\omega}},\delta)$ with $\delta>0$ and a constant $f_0>0$ such that $\mathcal{A}(\bm{\omega})$ is an $R_0$ tensor and $f(\bm{\omega})\geq f_0$
for all $\bm{\omega}\in\overline{\mathbb{B}}:=\mathbb{B}(\overline{\bm{\omega}},\delta)\cap\Omega$. Let us consider a sequence $\{\mathbf{x}_k\}\subset\mathbb{R}^I$. Then, by the continuity of $\mathcal{A}(\cdot)$, $\mathbf{q}(\cdot)$ and $\Phi$, for each $k$, there exists an $\bm{\omega}_k\in\overline{\mathbb{B}}$ such that
$\|\Phi(\mathbf{x}_k,\bm{\omega}_k)\|_2=\min_{\bm{\omega}\in \overline{\mathbb{B}}}\|\Phi(\mathbf{x}_k,\bm{\omega})\|_2$.
It then follows that
$$
G(\mathbf{x}_k) \geq\int_{\overline{\mathbb{B}}}
\|\Phi(\mathbf{x}_k,\bm{\omega})\|_2^2f(\bm{\omega})d\bm{\omega}
\geq\|\Phi(\mathbf{x}_k,\bm{\omega}_k)\|_2^2f_0\int_{\overline{\mathbb{B}}}d\bm{\omega}
\geq f_0C\|\Phi(\mathbf{x}_k,\bm{\omega}_k)\|_2^2,
$$
where $C=\int_{\overline{\mathbb{B}}}d\bm{\omega}$. To prove the lemma, it suffices to show that $\|\Phi(\mathbf{x}_k,\bm{\omega}_k)\|_2\rightarrow+\infty$ whenever $\|\mathbf{x}_k\|_2\rightarrow+\infty$.

 Assume that $\|\mathbf{x}_k\|_2\rightarrow+\infty$, it is not difficult to see that $x_{k,i}\rightarrow-\infty$ or $(\mathcal{A}(\bm{\omega}_k)\mathbf{x}_k^{N-1}+\mathbf{q}(\bm{\omega}_k))_i\rightarrow-\infty$ for some $i$, then $|\phi((\mathcal{A}(\bm{\omega}_k)\mathbf{x}_k^{N-1}+\mathbf{q}(\bm{\omega}_k))_i,x_{k,i})|\rightarrow+\infty$ and hence $\|\Phi(\mathbf{x}_k,\bm{\omega}_k)\|_2\rightarrow+\infty$. So we only need to consider the case where both $\{x_{k,i}\}$ and $\{(\mathcal{A}(\bm{\omega}_k)\mathbf{x}_k^{N-1}+\mathbf{q}(\bm{\omega}_k))_i\}$ are bounded below for all $i$. Then, by dividing each element of these sequences by $\|\mathbf{x}_k\|_2$ and passing to the limit, we obtain
\begin{equation*}
(\mathcal{A}(\widehat{\bm{\omega}})\widehat{\mathbf{v}}^{N-1})_i\geq0,\quad \widehat{v}_i\geq0,\quad i=1,2,\dots,I,
\end{equation*}
where $\widehat{\bm{\omega}}$ and $\widehat{\mathbf{v}}$ are accumulation points of $\{\bm{\omega}_k\}$ and $\left\{\frac{\mathbf{x}_k}{\|\mathbf{x}_k\|_2}\right\}$, respectively.

Note that $\widehat{\mathbf{v}}\in \overline{\mathbb{B}}$ and $\|\widehat{\mathbf{v}}\|_2=1$. Since $\mathcal{A}(\widehat{\bm{\omega}})$ is an $R_0$ tensor, there must exist some $i$ such that $(\mathcal{A}(\widehat{\bm{\omega}})\widehat{\mathbf{v}}^{N-1})_i\geq0$ and $\widehat{v}_i\geq0$. This implies $(\mathcal{A}(\bm{\omega}_k)\mathbf{x}_k^{N-1}+\mathbf{q}(\bm{\omega}_k))_i\rightarrow+\infty$ and $x_{k,i}\rightarrow+\infty$, which in turn implies $|\phi((\mathcal{A}(\bm{\omega}_k)\mathbf{x}_k^{N-1}+\mathbf{q}(\bm{\omega}_k))_i,x_{k,i})|\rightarrow+\infty$. Hence we have $\|\Phi(\mathbf{x}_k,\bm{\omega}_k)\|_2\rightarrow+\infty$. This completes the proof.
\end{proof}
The following theorem shows that the converse of Lemma \ref{STCP:theorem1} is also true, when $\mathcal{A}(\bm{\omega})\equiv\mathcal{A}$ and $\mathbf{q}(\bm{\omega})$ is a linear function of $\bm{\omega}$.
\begin{theorem}
Suppose that $\mathcal{A}(\bm{\omega})\equiv\mathcal{A}$. If $\mathcal{A}$ is not an $R_0$ tensor and $\mathbf{q}(\bm{\omega})$ is a linear function of $\bm{\omega}$, then there is a $\gamma>0$ such that the level set $\mathbb{D}(\gamma)$ is unbounded.
\end{theorem}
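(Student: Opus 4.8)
The plan is to exploit the failure of the $R_0$ property to produce an unbounded ray on which the objective $G$ stays below a fixed level. Since $\mathcal{A}$ is not an $R_0$ tensor, negating the definition yields a nonzero $\overline{\mathbf{x}}\in\mathbb{R}^I_+$ with $\mathcal{A}\overline{\mathbf{x}}^{N-1}\in\mathbb{R}^I_+$ and $\mathcal{A}\overline{\mathbf{x}}^{N}=0$. Because $\mathcal{A}\overline{\mathbf{x}}^{N}=\sum_{i}\overline{x}_i(\mathcal{A}\overline{\mathbf{x}}^{N-1})_i$ is a sum of nonnegative terms, it vanishes termwise, so $\overline{x}_i(\mathcal{A}\overline{\mathbf{x}}^{N-1})_i=0$ for every $i$; in particular $(\mathcal{A}\overline{\mathbf{x}}^{N-1})_i=0$ whenever $i\in\mathbb{J}(\overline{\mathbf{x}})$. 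I would then consider the ray $\{t\overline{\mathbf{x}}:t\geq0\}\subset\mathbb{R}^I_+$, which is unbounded since $\overline{\mathbf{x}}\neq\mathbf{0}_I$, and aim to show it is contained in a single level set $\mathbb{D}(\gamma)$.

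The main computation is a uniform-in-$t$ bound on the residual along this ray, the key point being that $\overline{\mathbf{x}}$ annihilates the tensor part of $\Phi$, leaving only the contribution of $\mathbf{q}$. Using $\mathcal{A}(\bm{\omega})\equiv\mathcal{A}$ and the homogeneity $(\mathcal{A}(t\overline{\mathbf{x}})^{N-1})_i=t^{N-1}(\mathcal{A}\overline{\mathbf{x}}^{N-1})_i$, I would treat the ``min'' function first and split on the index sets. For $i\in\mathbb{J}(\overline{\mathbf{x}})$ the complementarity above gives $(\mathcal{A}(t\overline{\mathbf{x}})^{N-1})_i=0$, so the $i$th component equals $\min((\mathbf{q}(\bm{\omega}))_i,\,t\overline{x}_i)$, whose absolute value is at most $|(\mathbf{q}(\bm{\omega}))_i|$ for every $t\geq0$. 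For $i\in\mathbb{I}(\overline{\mathbf{x}})$ the $i$th component equals $\min(t^{N-1}(\mathcal{A}\overline{\mathbf{x}}^{N-1})_i+(\mathbf{q}(\bm{\omega}))_i,\,0)$, and since $t^{N-1}(\mathcal{A}\overline{\mathbf{x}}^{N-1})_i\geq0$ its absolute value is again bounded by $|(\mathbf{q}(\bm{\omega}))_i|$. Collecting components gives $\|\Phi_1(t\overline{\mathbf{x}},\bm{\omega})\|_2\leq\|\mathbf{q}(\bm{\omega})\|_2$ for all $t\geq0$ and all $\bm{\omega}$.

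Integrating this bound over $\Omega$ yields $G_1(t\overline{\mathbf{x}})\leq\mathbb{E}\{\|\mathbf{q}(\bm{\omega})\|_2^2\}$ for all $t\geq0$, and the corresponding Fischer--Burmeister estimate follows from Tseng's inequality {\rm(\ref{STCP:equation2})}, namely $\|\Phi_2(t\overline{\mathbf{x}},\bm{\omega})\|_2\leq(\sqrt{2}+2)\|\Phi_1(t\overline{\mathbf{x}},\bm{\omega})\|_2$, whence $G_2(t\overline{\mathbf{x}})\leq(\sqrt{2}+2)^2\mathbb{E}\{\|\mathbf{q}(\bm{\omega})\|_2^2\}$. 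Setting $\gamma$ equal to the relevant right-hand side (and to any positive number if $\mathbf{q}\equiv\mathbf{0}_I$), the entire ray $\{t\overline{\mathbf{x}}:t\geq0\}$ lies in $\mathbb{D}(\gamma)$, so $\mathbb{D}(\gamma)$ is unbounded, as required.

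The step I expect to be the real obstacle is guaranteeing that this $\gamma$ is finite, i.e.\ that $\mathbb{E}\{\|\mathbf{q}(\bm{\omega})\|_2^2\}<\infty$; the standing hypotheses only control the first moment of $\mathbf{q}$, yet the second moment is exactly what $G$ needs in order to be finite-valued at all. This is where I expect the hypothesis that $\mathbf{q}(\bm{\omega})$ is a linear function of $\bm{\omega}$ to enter, reducing the required integrability to a moment condition on $\bm{\omega}$. A secondary point of care is the residual bound at the degenerate indices where both $\overline{x}_i=0$ and $(\mathcal{A}\overline{\mathbf{x}}^{N-1})_i=0$, but these are already absorbed into the $\mathbb{I}(\overline{\mathbf{x}})$ computation above.
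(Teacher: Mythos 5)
Your proposal is correct and follows essentially the same route as the paper: take a nonzero $\overline{\mathbf{x}}$ witnessing the failure of the $R_0$ property, observe the termwise complementarity $\overline{x}_i(\mathcal{A}\overline{\mathbf{x}}^{N-1})_i=0$, bound each component of $\Phi_1$ along the ray $\{t\overline{\mathbf{x}}\}$ by $|(\mathbf{q}(\bm{\omega}))_i|$, and pass to the FB function via Tseng's inequality (\ref{STCP:equation2}) to get $G(t\overline{\mathbf{x}})\leq(\sqrt{2}+2)^2\mathbb{E}\{\|\mathbf{q}(\bm{\omega})\|_2^2\}=:\gamma$. The finiteness concern you flag is exactly the point where the paper also invokes the linearity of $\mathbf{q}(\bm{\omega})$, so your treatment matches the paper's.
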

\begin{proof}
Since $\mathcal{A}$ is not an $R_0$ tensor, there exists a nonzero $\mathbf{x}\in\mathbb{R}^I$ such that
\begin{equation*}
\mathbf{x}\in\mathbb{R}^{I}_+,\quad \mathcal{A}\mathbf{x}^{N-1}\in\mathbb{R}^{I}_+,\quad \mathcal{A}\mathbf{x}^{N}=0,
\end{equation*}
which in particular implies that either $x_i=0$ or $(\mathcal{A}\mathbf{x}^{N-1})_i=0$ holds for each $i$. Hence we have
\begin{equation*}
\min((\mathcal{A}\mathbf{x}^{N-1}+\mathbf{q}(\bm{\omega}))_i,x_i)=
\begin{cases}
\begin{array}{ll}
  0 & x_i=0,\ (\mathcal{A}\mathbf{x}^{N-1}+\mathbf{q}(\bm{\omega}))_i\geq0; \\
  (\mathcal{A}\mathbf{x}^{N-1}+\mathbf{q}(\bm{\omega}))_i & x_i=0,\ (\mathcal{A}\mathbf{x}^{N-1}+\mathbf{q}(\bm{\omega}))_i\leq0; \\
  x_i & (\mathcal{A}\mathbf{x}^{N-1})_i=0,\ (\mathbf{q}(\bm{\omega}))_i\geq x_i; \\
  (\mathbf{q}(\bm{\omega}))_i & (\mathcal{A}\mathbf{x}^{N-1})_i=0,\ (\mathbf{q}(\bm{\omega}))_i\leq x_i.
\end{array}
\end{cases}
\end{equation*}
  Since $\mathcal{A}\mathbf{x}^{N-1}\in\mathbb{R}^I_+$, we have $|(\mathcal{A}\mathbf{x}^{N-1}+\mathbf{q}(\bm{\omega}))_i|\leq|(\mathbf{q}(\bm{\omega}))_i|$ whenever $(\mathcal{A}\mathbf{x}^{N-1}+\mathbf{q}(\bm{\omega}))_i\leq0$. Thus it follows from (\ref{STCP:equation2}) that
\begin{equation*}
\frac{1}{\sqrt{2}+2}|(\Phi_2(\mathbf{x},\bm{\omega}))_i|\leq|(\Phi_1(\mathbf{x},\bm{\omega}))_i|
=|\min((\mathcal{A}\mathbf{x}^{N-1}+\mathbf{q}(\bm{\omega}))_i,x_i)|\leq|(\mathbf{q}(\bm{\omega}))_i|
\end{equation*}
and hence we have
$$G(\mathbf{x})\leq(\sqrt{2}+2)^2\int_{\Omega}\|\mathbf{q}(\bm{\omega})\|_2^2dF(\bm{\omega}):=\gamma.$$
Since by assumption $\mathbf{q}(\bm{\omega})$ is a linear function of $\bm{\omega}$, it follows from assumption on $f(\bm{\omega})$ that we have $\gamma<\infty$.

Since the argument above holds for $\lambda\mathbf{x}$ with any $\lambda>0$, that is, $G(\lambda\mathbf{x})\leq\gamma$, we complete the proof.
\end{proof}
\section{Stochastic $R_0$ tensors for STCPs}\label{STCP:section4}
In this section, we use $\Phi(\mathbf{x},\bm{\omega})$ to denote $\Phi_1(\mathbf{x},\bm{\omega})$. We first give several equivalent conditions for the stochastic $R_0$ tensors. We then study the solution set of ${\rm ERM}(\mathcal{A}(\cdot),\mathbf{q}(\cdot))$ by the means of stochastic $R_0$ tensors.
\subsection{Stochastic $R_0$ tensors}
$\mathbf{A}(\cdot)\in \mathbb{R}^{I\times I}$ is called a stochastic $R_0$ matrix \cite[Definition 2.1]{slcp_fang_SJO_2007}, if
\begin{equation*}
\mathbf{x}\in\mathbb{R}^I_+,\quad \mathbf{A}(\bm{\omega})\mathbf{x}\in\mathbb{R}^I_+,\quad
\mathbf{x}^\top\mathbf{A}(\bm{\omega})\mathbf{x}=0\quad \text{a.e.}\quad
\Rightarrow\quad \mathbf{x}=\mathbf{0}_I.
\end{equation*}
Meanwhile, a stochastic $R_0$ tensor is formally defined as follows.
\begin{definition}
$\mathcal{A}(\cdot)$ is called a stochastic $R_0$ tensor, if
\begin{equation*}
\mathbf{x}\in\mathbb{R}^I_+,\quad \mathcal{A}(\bm{\omega})\mathbf{x}^{N-1}\in\mathbb{R}^I_+,\quad
\mathcal{A}(\bm{\omega})\mathbf{x}^{N}=0,\quad \text{a.e.}\quad
\Rightarrow\quad \mathbf{x}=\mathbf{0}_I.
\end{equation*}
\end{definition}
If $\Omega$ only contains a single realization, the definition of a stochastic $R_0$ tensor reduces to that of an $R_0$ tensor. If $N=2$, the definition of a stochastic $R_0$ tensor reduces to that of a stochastic $R_0$ matrix.
\begin{theorem}\label{STCP:theorem2}
The following statements are equivalent.
\begin{enumerate}[{\rm (i)}]
\item $\mathcal{A}(\cdot)$ is a stochastic $R_0$ tensor.
\item For any nonzero $\mathbf{x}\in\mathbb{R}^I_+$, at least one of the following two conditions is satisfied:
    \begin{enumerate}[{\rm (a)}]
    \item $\mathbb{P}\{\bm{\omega}:(\mathcal{A}(\bm{\omega})\mathbf{x}^{N-1})_i\neq0\}>0$ for some $i\in\mathbb{J}(\mathbf{x})$;
    \item $\mathbb{P}\{\bm{\omega}:(\mathcal{A}(\bm{\omega})\mathbf{x}^{N-1})_i<0\}>0$ for some $i\in\mathbb{I}(\mathbf{x})$.
    \end{enumerate}
\item ${\rm ERM}(\mathcal{A}(\cdot),\mathbf{q}(\cdot))$ with $\mathbf{q}(\bm{\omega})\equiv\mathbf{0}_I$ has zero as its unique global solution.
\end{enumerate}
\end{theorem}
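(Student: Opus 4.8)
The plan is to establish the two equivalences (i) $\Leftrightarrow$ (ii) and (i) $\Leftrightarrow$ (iii), both of which rest on a single elementary observation. For any $\mathbf{x}\in\mathbb{R}^I_+$ with $\mathcal{A}(\bm{\omega})\mathbf{x}^{N-1}\in\mathbb{R}^I_+$, the scalar $\mathcal{A}(\bm{\omega})\mathbf{x}^N=\sum_{i=1}^I x_i(\mathcal{A}(\bm{\omega})\mathbf{x}^{N-1})_i$ is a sum of nonnegative terms, hence it vanishes if and only if $x_i(\mathcal{A}(\bm{\omega})\mathbf{x}^{N-1})_i=0$ for every $i$. Consequently the three defining conditions of a stochastic $R_0$ tensor, holding for almost every $\bm{\omega}$, are equivalent to requiring, for almost every $\bm{\omega}$, that $(\mathcal{A}(\bm{\omega})\mathbf{x}^{N-1})_i=0$ for each $i\in\mathbb{J}(\mathbf{x})$ and $(\mathcal{A}(\bm{\omega})\mathbf{x}^{N-1})_i\geq0$ for each $i\in\mathbb{I}(\mathbf{x})$. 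I would record this reformulation first and reuse it for both equivalences.

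For (i) $\Leftrightarrow$ (ii) I would argue by contraposition on both sides. The negation of (ii) asserts the existence of a nonzero $\mathbf{x}\in\mathbb{R}^I_+$ for which both (a) and (b) fail, that is, $(\mathcal{A}(\bm{\omega})\mathbf{x}^{N-1})_i=0$ almost everywhere for every $i\in\mathbb{J}(\mathbf{x})$ and $(\mathcal{A}(\bm{\omega})\mathbf{x}^{N-1})_i\geq0$ almost everywhere for every $i\in\mathbb{I}(\mathbf{x})$. Because $I$ is finite, the finitely many exceptional null sets union to a single null set, so all these component-wise statements hold simultaneously off one null set. By the reformulation above this is exactly the negation of (i), namely a nonzero $\mathbf{x}\in\mathbb{R}^I_+$ satisfying the stochastic $R_0$ conditions almost everywhere. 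Hence the negation of (ii) is equivalent to the negation of (i), which gives (i) $\Leftrightarrow$ (ii).

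For (i) $\Leftrightarrow$ (iii) I would use that in this section $\Phi=\Phi_1$ is built from the ``min'' function and $\mathbf{q}\equiv\mathbf{0}_I$, so $\Phi(\mathbf{0}_I,\bm{\omega})=\mathbf{0}_I$ and $G(\mathbf{0}_I)=0$; since $G\geq0$, zero is always a global solution and only its \emph{uniqueness} is in question. A point $\mathbf{x}\in\mathbb{R}^I_+$ is a global solution if and only if $G(\mathbf{x})=0$, and since $G(\mathbf{x})=\int_{\Omega}\|\Phi(\mathbf{x},\bm{\omega})\|_2^2\,dF(\bm{\omega})$ has a nonnegative integrand, this holds if and only if $\Phi(\mathbf{x},\bm{\omega})=\mathbf{0}_I$ for almost every $\bm{\omega}$. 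Applying the NCP-function property $\phi(a,b)=0\Leftrightarrow a\geq0,\ b\geq0,\ ab=0$ componentwise, $\Phi(\mathbf{x},\bm{\omega})=\mathbf{0}_I$ is equivalent to $\mathbf{x}\in\mathbb{R}^I_+$, $\mathcal{A}(\bm{\omega})\mathbf{x}^{N-1}\in\mathbb{R}^I_+$ and $\mathcal{A}(\bm{\omega})\mathbf{x}^N=0$. Thus the global solution set is exactly the set of $\mathbf{x}\in\mathbb{R}^I_+$ meeting the stochastic $R_0$ conditions almost everywhere, and it reduces to $\{\mathbf{0}_I\}$ precisely when (i) holds.

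The algebra here is light; the part demanding care is the measure-theoretic bookkeeping. I would verify that the ``almost everywhere'' quantifier commutes with the finite conjunction over the components $i=1,\dots,I$ (finite unions of null sets are null), and that the vanishing of the integral $G(\mathbf{x})$ genuinely forces $\Phi(\mathbf{x},\bm{\omega})=\mathbf{0}_I$ almost everywhere rather than only at points where the density is positive. To avoid any mismatch I would take every ``almost everywhere'' statement with respect to the same probability measure under which $G$ is defined, so that the notions of null set in the definition of the stochastic $R_0$ tensor and in the integral $G$ coincide.
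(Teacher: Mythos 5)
Your proof is correct and rests on the same two observations as the paper's: that for $\mathbf{x}\in\mathbb{R}^I_+$ with $\mathcal{A}(\bm{\omega})\mathbf{x}^{N-1}\in\mathbb{R}^I_+$ the scalar $\mathcal{A}(\bm{\omega})\mathbf{x}^{N}$ is a sum of nonnegative terms, so the stochastic $R_0$ conditions reduce to componentwise statements on $\mathbb{J}(\mathbf{x})$ and $\mathbb{I}(\mathbf{x})$, and that $G(\mathbf{x})=0$ forces $\Phi(\mathbf{x},\bm{\omega})=\mathbf{0}_I$ almost everywhere. The paper organizes the argument as the cycle ${\rm (i)}\Rightarrow{\rm (iii)}\Rightarrow{\rm (ii)}\Rightarrow{\rm (i)}$ while you prove the two equivalences ${\rm (i)}\Leftrightarrow{\rm (ii)}$ and ${\rm (i)}\Leftrightarrow{\rm (iii)}$ directly, but the substance is identical.
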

\begin{proof}
The proof is given in the order ${\rm (i)}\Rightarrow{\rm (iii)}\Rightarrow{\rm (ii)}\Rightarrow{\rm (i)}$.

${\rm (i)}\Rightarrow{\rm (iii)}$: It is easy to see that ${\bf 0}_I$ is a global solution of ${\rm ERM}(\mathcal{A}(\cdot),\mathbf{q}(\cdot))$ with $\mathbf{q}(\bm{\omega})\equiv\mathbf{0}_I$, since $G(\mathbf{x})\geq0$ for all $\mathbf{x}\in\mathbb{R}^I_+$ and $G(\mathbf{0}_I)=0$. Now we show that the uniqueness of the solution. Let $\widetilde{\mathbf{x}}\in\mathbb{R}^I_+$ be an arbitrary vector such that $G(\widetilde{\mathbf{x}})=0$. By the definition of $G$, we have
\begin{equation*}
\Phi(\widetilde{\mathbf{x}},\bm{\omega})=\min(\mathcal{A}(\bm{\omega})\widetilde{\mathbf{x}}^{N-1},
\widetilde{\mathbf{x}})=0,\quad \text{a.e.},
\end{equation*}
which implies
\begin{equation*}
\widetilde{\mathbf{x}}\in\mathbb{R}^I_+,\quad\mathcal{A}(\bm{\omega})
\widetilde{\mathbf{x}}^{N-1}\in\mathbb{R}^I_+,\quad \mathcal{A}(\bm{\omega})
\widetilde{\mathbf{x}}^{N}=0,\quad\text{a.e.}
\end{equation*}
By the definition of a stochastic $R_0$ tensor, we deduce $\widetilde{\mathbf{x}}=\mathbf{0}_I$.

${\rm (iii)}\Rightarrow{\rm (ii)}$: Suppose (ii) does not hold, that is, there exists a nonzero $\mathbf{x}_0\in\mathbb{R}^I_+$ such that
\begin{equation*}
\begin{split}
\mathbb{P}\{\bm{\omega}:(\mathcal{A}(\bm{\omega})\mathbf{x}_0^{N-1})_i=0\}&=1\text{ for all }i\in\mathbb{J}(\mathbf{x}_0),\\
\mathbb{P}\{\bm{\omega}:(\mathcal{A}(\bm{\omega})\mathbf{x}_0^{N-1})_i\geq0\}&=1\text{ for all }i\in\mathbb{I}(\mathbf{x}_0).
\end{split}
\end{equation*}
Then it follows from $\mathbf{q}(\bm{\omega})\equiv\mathbf{0}_I$ that $G(\mathbf{x}_0)=0$. Moreover, it is easy to see that for any $\lambda>0$, $\lambda\mathbf{x}_0$ is a solution of ${\rm ERM}(\mathcal{A}(\cdot),\mathbf{0}_I)$, i.e., $\mathbf{0}_I$ is not the unique solution of ${\rm ERM}(\mathcal{A}(\cdot),\mathbf{0}_I)$.

${\rm (ii)}\Rightarrow{\rm (i)}$: Assume that there exists a nonzero vector $\mathbf{x}\in\mathbb{R}^{I}$ such that
\begin{equation*}
\mathbf{x}\in\mathbb{R}^I_+,\quad \mathcal{A}(\bm{\omega})\mathbf{x}^{N-1}\in\mathbb{R}^I_+,\quad
\mathcal{A}(\bm{\omega})\mathbf{x}^{N}=0,\quad \text{a.e.}
\end{equation*}
Then, since $\mathcal{A}(\bm{\omega})\mathbf{x}^{N}=0$, we have for almost all $\bm{\omega}$, $(\mathcal{A}(\bm{\omega})\mathbf{x}^{N-1})_i=0$ for all $i\in\mathbb{J}(\mathbf{x})$ and $(\mathcal{A}(\bm{\omega})\mathbf{x}^{N-1})_i\geq0$ for all $i\in\mathbb{I}(\mathbf{x})$. This contradicts (ii).
\end{proof}
For $\nu>0$, let us denote $\mathbb{B}_{\Omega}(\overline{\bm{\omega}},\nu)=\{\bm{\omega}\in \Omega:\|\bm{\omega}-\overline{\bm{\omega}}\|_2<\nu\}$ and
\begin{equation*}
{\rm supp}\Omega:=\left\{\overline{\bm{\omega}}\in\Omega:
\int_{\mathbb{B}_{\Omega}(\overline{\bm{\omega}},\nu)\cap\Omega}
dF(\bm{\omega})>0\text{ for any }\nu>0\right\}.
\end{equation*}
Here ${\rm supp}\Omega$ is called the support set of $\Omega$.
\begin{corollary}
Suppose that $\mathcal{A}(\cdot)$ is a continuous function of $\bm{\omega}$. Then $\mathcal{A}(\cdot)$ is a stochastic $R_0$ tensor if and only if for any nonzero $\mathbf{x}\in\mathbb{R}^I_+$, at least one of the following two conditions is satisfied:
\begin{enumerate}[{\rm (a)}]
\item there exists $\overline{\bm{\omega}}\in{\rm supp}\Omega$ such that $(\mathcal{A}(\overline{\bm{\omega}})\mathbf{x}^{N-1})_i\neq0$ for some $i\in\mathbb{J}(\mathbf{x})$;
\item there exists $\overline{\bm{\omega}}\in{\rm supp}\Omega$ such that $(\mathcal{A}(\overline{\bm{\omega}})\mathbf{x}^{N-1})_i<0$ for some $i\in\mathbb{I}(\mathbf{x})$.
\end{enumerate}
\end{corollary}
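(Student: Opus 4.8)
The plan is to reduce the statement to Theorem \ref{STCP:theorem2} and then, under the continuity hypothesis, convert the probabilistic conditions (ii)(a)--(b) of that theorem into the pointwise support-set conditions (a)--(b) stated here. Since Theorem \ref{STCP:theorem2} already gives that $\mathcal{A}(\cdot)$ is a stochastic $R_0$ tensor if and only if condition (ii) holds, it suffices to show that, for each fixed nonzero $\mathbf{x}\in\mathbb{R}^I_+$ and each index $i$, the event-probability statement $\mathbb{P}\{\bm{\omega}:(\mathcal{A}(\bm{\omega})\mathbf{x}^{N-1})_i\neq0\}>0$ is equivalent to the existence of some $\overline{\bm{\omega}}\in{\rm supp}\Omega$ with $(\mathcal{A}(\overline{\bm{\omega}})\mathbf{x}^{N-1})_i\neq0$, and likewise with $\neq0$ replaced by $<0$. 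Quantifying over $i\in\mathbb{J}(\mathbf{x})$ in the first case and over $i\in\mathbb{I}(\mathbf{x})$ in the second then matches (ii)(a)--(b) against (a)--(b) exactly, because $\exists i\,\exists\overline{\bm{\omega}}$ and $\exists\overline{\bm{\omega}}\,\exists i$ coincide.

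Fix $\mathbf{x}$ and $i$ and set $g(\bm{\omega}):=(\mathcal{A}(\bm{\omega})\mathbf{x}^{N-1})_i$; this is continuous in $\bm{\omega}$, since $\mathcal{A}(\cdot)$ is continuous and $g$ depends polynomially on the entries of $\mathcal{A}(\bm{\omega})$. The sets $\{g\neq0\}$ and $\{g<0\}$ are therefore relatively open in $\Omega$. The key step I will establish is the general equivalence: for a relatively open set $U\subseteq\Omega$, one has $\mathbb{P}(U)>0$ if and only if $U\cap{\rm supp}\Omega\neq\emptyset$. Applying this once with $U=\{g\neq0\}$ and once with $U=\{g<0\}$ yields precisely the two index-wise equivalences required above.

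For the easy direction, suppose $\overline{\bm{\omega}}\in U\cap{\rm supp}\Omega$. Openness of $U$ gives $\nu>0$ with $\mathbb{B}_{\Omega}(\overline{\bm{\omega}},\nu)\subseteq U$, and the defining property of ${\rm supp}\Omega$ gives $\int_{\mathbb{B}_{\Omega}(\overline{\bm{\omega}},\nu)\cap\Omega}dF(\bm{\omega})>0$, so $\mathbb{P}(U)>0$. For the converse I would invoke the standard fact that $\mathbb{P}(\Omega\setminus{\rm supp}\Omega)=0$: every point outside the support has, by definition, some neighborhood of measure zero, and by second countability of $\mathbb{R}^I$ (Lindel\"of) the complement is covered by countably many such null neighborhoods and is hence null. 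It follows that $\mathbb{P}(U\cap{\rm supp}\Omega)=\mathbb{P}(U)>0$, so $U\cap{\rm supp}\Omega$ is nonempty.

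I expect the only genuinely nontrivial point to be this converse, namely the clean statement and use of $\mathbb{P}(\Omega\setminus{\rm supp}\Omega)=0$ within the measure-theoretic setup $(\Omega,\mathfrak{F},P)$ of the paper; the easy direction is a one-line continuity-plus-openness argument, and the reduction to Theorem \ref{STCP:theorem2} is immediate. Once the open-set equivalence is in hand, combining it with conditions (ii)(a)--(b) of Theorem \ref{STCP:theorem2} completes the proof in both directions.
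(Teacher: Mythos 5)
Your argument is correct, and it is strictly more complete than the proof the paper actually gives. The paper's entire proof is one sentence: by continuity of $\mathcal{A}(\cdot)$ and the definition of ${\rm supp}\Omega$, conditions (a) and (b) of the corollary imply conditions (a) and (b) of Theorem \ref{STCP:theorem2}\,(ii). That is exactly your ``easy direction'' (a support point lying in the relatively open set $\{g\neq 0\}$ or $\{g<0\}$ forces that set to have positive probability), and it only establishes that the corollary's conditions are \emph{sufficient} for $\mathcal{A}(\cdot)$ to be a stochastic $R_0$ tensor. The paper never addresses the converse implication needed for the stated ``if and only if,'' namely that positive probability of a relatively open event forces it to meet ${\rm supp}\Omega$. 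You supply precisely this missing half via the standard fact $\mathbb{P}(\Omega\setminus{\rm supp}\Omega)=0$, proved by covering the complement of the support with null neighborhoods and extracting a countable subcover by the Lindel\"of property of $\mathbb{R}^I$; note that for this direction openness of the event is not even needed, only its measurability (which holds since $g$ is continuous). Your reduction to Theorem \ref{STCP:theorem2} and the quantifier bookkeeping over $i\in\mathbb{J}(\mathbf{x})$ and $i\in\mathbb{I}(\mathbf{x})$ are exactly as in the paper, so the net effect of your proposal is the paper's argument plus a correct patch for the direction the paper leaves unproved.
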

\begin{proof}
By the continuity of $\mathcal{A}(\cdot)$ and the definition of ${\rm supp}\Omega$, conditions (a) and (b) in this corollary imply (a) and (b) in Theorem \ref{STCP:theorem2} (ii), respectively.
\end{proof}
\begin{corollary}
Suppose that $\mathcal{A}(\cdot)$ is a continuous function of $\bm{\omega}$ and $\mathcal{A}(\overline{\bm{\omega}})\in T_{N,I}$ is an $R_0$ tensor for some $\overline{\bm{\omega}}\in\Omega$. Then $\mathcal{A}(\cdot)$ is a stochastic $R_0$ tensor.
\end{corollary}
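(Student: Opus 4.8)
The plan is to deduce this corollary from the preceding corollary, which characterizes stochastic $R_0$ tensors (under continuity of $\mathcal{A}(\cdot)$) through conditions (a) and (b) phrased over ${\rm supp}\,\Omega$. Thus it suffices to fix an arbitrary nonzero $\mathbf{x}\in\mathbb{R}^I_+$ and exhibit a point $\overline{\bm{\omega}}\in{\rm supp}\,\Omega$ at which either $(\mathcal{A}(\overline{\bm{\omega}})\mathbf{x}^{N-1})_i\neq0$ for some $i\in\mathbb{J}(\mathbf{x})$, or $(\mathcal{A}(\overline{\bm{\omega}})\mathbf{x}^{N-1})_i<0$ for some $i\in\mathbb{I}(\mathbf{x})$.

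First I would record the elementary pointwise consequence of the $R_0$ property. Suppose, for contradiction, that at a point $\bm{\omega}$ where $\mathcal{A}(\bm{\omega})$ is an $R_0$ tensor neither alternative holds for our fixed $\mathbf{x}$; that is, $(\mathcal{A}(\bm{\omega})\mathbf{x}^{N-1})_i=0$ for every $i\in\mathbb{J}(\mathbf{x})$ and $(\mathcal{A}(\bm{\omega})\mathbf{x}^{N-1})_i\geq0$ for every $i\in\mathbb{I}(\mathbf{x})$. Then $\mathcal{A}(\bm{\omega})\mathbf{x}^{N-1}\in\mathbb{R}^I_+$, and since $\mathcal{A}(\bm{\omega})\mathbf{x}^N=\sum_{i\in\mathbb{J}(\mathbf{x})}x_i(\mathcal{A}(\bm{\omega})\mathbf{x}^{N-1})_i=0$, the nonzero $\mathbf{x}$ witnesses a violation of the $R_0$ definition at $\bm{\omega}$, a contradiction. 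Hence at any $R_0$ point the required dichotomy already holds for $\mathbf{x}$.

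It remains to place such an $R_0$ point inside ${\rm supp}\,\Omega$. By hypothesis $\mathcal{A}(\overline{\bm{\omega}})$ is an $R_0$ tensor for some $\overline{\bm{\omega}}\in\Omega$, and Lemma \ref{STCP:lemma1} upgrades this to $R_0$-ness of $\mathcal{A}(\bm{\omega})$ on an entire relative ball $\overline{\mathbb{B}}=\mathbb{B}(\overline{\bm{\omega}},\delta)\cap\Omega$. Choosing any $\bm{\omega}^\ast\in\overline{\mathbb{B}}\cap{\rm supp}\,\Omega$ and applying the previous paragraph at $\bm{\omega}^\ast$ completes the verification of (a)/(b), and the preceding corollary then yields that $\mathcal{A}(\cdot)$ is a stochastic $R_0$ tensor.

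The one delicate point, and the step I expect to be the real obstacle, is guaranteeing $\overline{\mathbb{B}}\cap{\rm supp}\,\Omega\neq\emptyset$. The conclusion genuinely fails if $\overline{\bm{\omega}}$ carries no probability mass in any neighborhood: if all mass sits on a region where $\mathcal{A}$ is not $R_0$, the ``a.e.'' quantifier in the definition simply ignores $\overline{\bm{\omega}}$. The clean way to close this gap is to invoke the standing positivity of the density, exactly as in Theorem \ref{STCP:theorem1}: taking $\overline{\bm{\omega}}$ with $f(\overline{\bm{\omega}})>0$, equivalently $\overline{\bm{\omega}}\in{\rm supp}\,\Omega$, forces every relative ball about $\overline{\bm{\omega}}$ to have positive measure, so $\overline{\mathbb{B}}\cap{\rm supp}\,\Omega$ is nonempty and one may in fact take $\bm{\omega}^\ast=\overline{\bm{\omega}}$.
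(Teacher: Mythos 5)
Your route—reduce to the preceding corollary's conditions (a)/(b) over ${\rm supp}\,\Omega$, show that at any point where $\mathcal{A}(\bm{\omega})$ is an $R_0$ tensor the dichotomy must hold for every nonzero $\mathbf{x}\in\mathbb{R}^I_+$, and use Lemma \ref{STCP:lemma1} to propagate the $R_0$ property to a relative ball—is exactly the intended one; the paper in fact states this corollary with no proof at all, so there is nothing more specific to compare against. Your pointwise argument (if neither (a) nor (b) holds at an $R_0$ point then $\mathcal{A}(\bm{\omega})\mathbf{x}^{N-1}\in\mathbb{R}^I_+$ and $\mathcal{A}(\bm{\omega})\mathbf{x}^N=\sum_{i\in\mathbb{J}(\mathbf{x})}x_i(\mathcal{A}(\bm{\omega})\mathbf{x}^{N-1})_i=0$, contradicting $R_0$) is correct and is the only substantive step.

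The ``delicate point'' you flag is real and is not a defect of your proof but of the statement: as written, the corollary does not assume $\overline{\bm{\omega}}\in{\rm supp}\,\Omega$ (equivalently $f(\overline{\bm{\omega}})>0$), and without that assumption the conclusion can fail. For instance, take $\mathcal{A}(\bm{\omega})=g(\bm{\omega})\mathcal{A}_0$ with $\mathcal{A}_0$ an $R_0$ tensor and $g$ a continuous nonnegative function equal to $1$ at $\overline{\bm{\omega}}$ and vanishing outside a relative ball of $F$-measure zero; then $\mathcal{A}(\overline{\bm{\omega}})$ is $R_0$ and $\mathcal{A}(\cdot)$ is continuous, yet $\mathcal{A}(\bm{\omega})=\mathcal{O}$ a.e., so every nonzero $\mathbf{x}\in\mathbb{R}^I_+$ violates the stochastic $R_0$ definition. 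So the hypothesis $\overline{\bm{\omega}}\in{\rm supp}\,\Omega$ (or $f(\overline{\bm{\omega}})>0$, as in Theorem \ref{STCP:theorem1}) must be added, and once it is, your argument closes: every relative ball about $\overline{\bm{\omega}}$ has positive measure, so you may take $\bm{\omega}^\ast=\overline{\bm{\omega}}$ itself in conditions (a)/(b) of the preceding corollary. With that amendment your proof is complete and correct.
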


The following example shows that the condition that $\mathcal{A}(\cdot)$ is a stochastic $R_0$ tensor is weaker than the condition that $\mathcal{A}(\cdot)$ is a continuous function of $\bm{\omega}$ and $\mathcal{A}(\overline{\bm{\omega}})$ is an $R_0$ tensor for some $\overline{\bm{\omega}}\in\Omega$.
\begin{example}
Let
\begin{equation*}
\begin{split}
\mathcal{A}(\overline{\omega})(1,:,:)&=
\begin{pmatrix}
-2\omega&\omega-|\omega|&0\\
\omega-|\omega|&-2\omega&0\\
0&0&\omega+|\omega|
\end{pmatrix},\\
\mathcal{A}(\overline{\omega})(2,:,:)&=
\begin{pmatrix}
0&0&0\\
0&\omega+|\omega|&-2\omega\\
0&-2\omega&\omega+|\omega|\\
\end{pmatrix},\
\mathcal{A}(\overline{\omega})(3,:,:)=
\begin{pmatrix}
0&0&0\\
0&0&0\\
0&0&0\\
\end{pmatrix},
\end{split}
\end{equation*}
where $\omega\in\Omega=[-0.5,0.5]$ and $\omega$ is uniformly distributed on $\Omega$. Clearly, for $\omega<0$, we have
\begin{equation*}
\mathcal{A}(\overline{\omega})(1,:,:)=
\begin{pmatrix}
-2\omega&2\omega&0\\
2\omega&-2\omega&0\\
0&0&0
\end{pmatrix},\
\mathcal{A}(\overline{\omega})(2,:,:)=
\begin{pmatrix}
0&0&0\\
0&0&-2\omega\\
0&-2\omega&0\\
\end{pmatrix}.
\end{equation*}
Then $\mathbf{x}=(1,1,0)^\top$ satisfies $\mathcal{A}(\omega)\mathbf{x}^2={\bf0}$. On the other hand, for $\omega>0$, we have
\begin{equation*}
\mathcal{A}(\overline{\omega})(1,:,:)=
\begin{pmatrix}
-2\omega&0&0\\
0&-2\omega&0\\
0&0&2\omega
\end{pmatrix},\
\mathcal{A}(\overline{\omega})(2,:,:)=
\begin{pmatrix}
0&0&0\\
0&2\omega&-2\omega\\
0&-2\omega&2\omega\\
\end{pmatrix}.
\end{equation*}
Then $\mathbf{x}=(0,1,1)^\top$ satisfies $\mathcal{A}(\omega)\mathbf{x}^2={\bf0}$. In this example, there is no $\omega\in\Omega$ such that $\mathcal{A}(\omega)$ is an $R_0$ tensor. However, $\mathcal{A}(\cdot)$ is a stochastic $R_0$ tensor as verified by Theorem {\rm \ref{STCP:theorem2}} {\rm (ii)}.

For any nonzero $\mathbf{x}\in\mathbb{R}^3_+$, if $x_1\neq0$ or $x_2\neq0$, then for any $\omega>0$, $(\mathcal{A}(\omega)\mathbf{x}^2)_1=-2\omega (x_1^2+x_2^2)<0$. If $x_1\neq0$ but $x_2=0$, then for any $\omega<0$, $(\mathcal{A}(\omega)\mathbf{x}^2)_1=-2\omega x_1^2<0$. If only $x_2\neq x_3$, then for any $\omega>0$, $(\mathcal{A}(\omega)\mathbf{x}^2)_2=-2\omega (x_2-x_3)^2<0$.
\end{example}
The following proposition shows a relationship between $\mathcal{A}(\cdot)$ and $\overline{\mathcal{A}}:=\mathbb{E}\{\mathcal{A}(\omega)\}$.
\begin{proposition}\label{STCP:proposition1}
If $\overline{\mathcal{A}}\in T_{N,I}$ is an $R_0$ tensor, then $\mathcal{A}(\cdot)$ is a stochastic $R_0$ tensor.
\end{proposition}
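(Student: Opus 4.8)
The plan is to show that any fixed vector $\mathbf{x}$ satisfying the hypotheses of the stochastic $R_0$ condition also satisfies the hypotheses of the ordinary $R_0$ condition for the averaged tensor $\overline{\mathcal{A}}$, and then to invoke the assumption on $\overline{\mathcal{A}}$ directly. The mechanism behind this transfer is the observation that, for a \emph{fixed} $\mathbf{x}$, both maps $\mathcal{A}\mapsto\mathcal{A}\mathbf{x}^{N-1}$ and $\mathcal{A}\mapsto\mathcal{A}\mathbf{x}^{N}$ are linear in the entries of $\mathcal{A}$, so they commute with the expectation operator.

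Concretely, I would begin by supposing that $\mathbf{x}\in\mathbb{R}^I_+$ satisfies $\mathcal{A}(\bm{\omega})\mathbf{x}^{N-1}\in\mathbb{R}^I_+$ and $\mathcal{A}(\bm{\omega})\mathbf{x}^{N}=0$ for almost all $\bm{\omega}$, the goal being to conclude $\mathbf{x}=\mathbf{0}_I$. The first step is the interchange of averaging and the tensor action. For each index $i$, the quantity $(\mathcal{A}(\bm{\omega})\mathbf{x}^{N-1})_i=\sum_{i_2,\dots,i_N=1}^{I}a_{ii_2\dots i_N}(\bm{\omega})\,x_{i_2}\cdots x_{i_N}$ is a finite linear combination of the entries of $\mathcal{A}(\bm{\omega})$ with coefficients depending only on $\mathbf{x}$; the standing assumption $\mathbb{E}\{\|\mathcal{A}(\bm{\omega})\|_F\}<\infty$ ensures each entrywise expectation is finite, so that $\mathbb{E}\{(\mathcal{A}(\bm{\omega})\mathbf{x}^{N-1})_i\}=(\overline{\mathcal{A}}\mathbf{x}^{N-1})_i$ and likewise $\mathbb{E}\{\mathcal{A}(\bm{\omega})\mathbf{x}^{N}\}=\overline{\mathcal{A}}\mathbf{x}^{N}$.

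The second step is to push the two constraints through the expectation. Since $\mathcal{A}(\bm{\omega})\mathbf{x}^{N-1}\in\mathbb{R}^I_+$ almost everywhere, every component is nonnegative a.e., hence its expectation is nonnegative, giving $\overline{\mathcal{A}}\mathbf{x}^{N-1}\in\mathbb{R}^I_+$; and since $\mathcal{A}(\bm{\omega})\mathbf{x}^{N}=0$ almost everywhere, its expectation vanishes, giving $\overline{\mathcal{A}}\mathbf{x}^{N}=0$. Together with $\mathbf{x}\in\mathbb{R}^I_+$, these are exactly the premises of the $R_0$ condition for $\overline{\mathcal{A}}$, so the hypothesis that $\overline{\mathcal{A}}$ is an $R_0$ tensor forces $\mathbf{x}=\mathbf{0}_I$, which is precisely the defining implication for $\mathcal{A}(\cdot)$ to be a stochastic $R_0$ tensor.

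I do not expect a genuine obstacle here: the statement is essentially a consequence of the linearity of the tensor action in $\mathcal{A}$ together with the fact that nonnegativity is preserved under expectation and that an a.e.-vanishing scalar has zero mean. The only point requiring mild care is justifying the interchange of expectation with the finite sum defining $\mathcal{A}(\bm{\omega})\mathbf{x}^{N-1}$, and this is immediate from the integrability hypothesis on $\|\mathcal{A}(\bm{\omega})\|_F$. It is worth noting that the converse fails in general, so this proposition gives only a sufficient condition; Example~3.1 (with $\overline{\mathcal{A}}=\mathbb{E}\{\mathcal{A}(\bm{\omega})\}$ computed from a symmetric distribution) can be inspected to see that a stochastic $R_0$ tensor need not have an $R_0$ mean.
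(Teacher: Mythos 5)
Your proof is correct and uses the same mechanism as the paper's: for a fixed $\mathbf{x}$ the maps $\mathcal{A}\mapsto\mathcal{A}\mathbf{x}^{N-1}$ and $\mathcal{A}\mapsto\mathcal{A}\mathbf{x}^{N}$ are linear in the entries of $\mathcal{A}$, so the a.e.\ nonnegativity and vanishing conditions pass through the expectation to $\overline{\mathcal{A}}$, and the $R_0$ property of $\overline{\mathcal{A}}$ then forces $\mathbf{x}=\mathbf{0}_I$. The only difference is presentational (the paper argues contrapositively via the index-set characterization in Theorem \ref{STCP:theorem2}(ii), you argue directly from the definition), and the counterexample to the converse you allude to is Example 4.1 of the paper, not 3.1.
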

\begin{proof}
If $\mathcal{A}(\cdot)$ were not a stochastic $R_0$ tensor, then by Theorem \ref{STCP:theorem2} (ii), there exists nonzero $\mathbf{x}_0\in\mathbb{R}^I_+$ such that for almost all $\bm{\omega}$, $(\mathcal{A}(\bm{\omega})\mathbf{x}_0^{N-1})_i=0$ for $i\in\mathbb{J}(\mathbf{x}_0)$ and $(\mathcal{A}(\bm{\omega})\mathbf{x}_0^{N-1})_i\geq0$ for $i\in\mathbb{I}(\mathbf{x}_0)$. Therefore, $(\overline{\mathcal{A}}\mathbf{x}_0^{N-1})_i=0$ for $i\in\mathbb{J}(\mathbf{x}_0)$ and $(\overline{\mathcal{A}}\mathbf{x}_0^{N-1})_i\geq0$ for $i\in\mathbb{I}(\mathbf{x}_0)$. This is impossible, since $\overline{\mathcal{A}}$ is an $R_0$ tensor.
\end{proof}
This proposition implies that for any given $\widetilde{\mathcal{A}}\in T_{N,I}$, if $\widetilde{\mathcal{A}}$ is an $R_0$ tensor, then $\mathcal{A}(\cdot)=\widetilde{\mathcal{A}}+\mathcal{A}_0(\cdot)$ with $\mathbb{E}\{\mathcal{A}_0(\bm{\omega})\}$ being the zero tensor is a stochastic $R_0$ tensor. The converse of this proposition is not true. The next proposition gives a way to construct a stochastic $R_0$ tensor $\mathcal{A}(\cdot)$ from a given $\widetilde{\mathcal{A}}$ which is not necessarily an $R_0$ tensor. Let
\begin{equation*}
\Xi(\mathcal{A}):=\left\{\mathbf{x}\in\mathbb{R}_+^I:\mathbf{x}\neq\mathbf{0}_I,
(\mathcal{A}\mathbf{x}^{N-1})_i=0,i\in\mathbb{J}(\mathbf{x}) \text{ and }(\mathcal{A}\mathbf{x}^{N-1})_i \geq 0,i\in\mathbb{I}(\mathbf{x})\right\}.
\end{equation*}
Obviously, if $\Xi(\widetilde{\mathcal{A}})=\emptyset$, then $\widetilde{\mathcal{A}}$ is an $R_0$ tensor, and hence, by Proposition \ref{STCP:proposition1}, $\mathcal{A}(\cdot)=\widetilde{\mathcal{A}}+\mathcal{A}_0(\cdot)$ with $\mathbb{E}\{\mathcal{A}_0(\bm{\omega})\}$ being the zero tensor is a stochastic $R_0$ tensor.

\begin{proposition}\label{STCP:proposition2}
Let $\widetilde{\mathcal{A}}$ and $\mathcal{A}_0(\cdot)$ be such that $\Xi(\widetilde{\mathcal{A}})\neq\emptyset$ and $\mathbb{E}\{\mathcal{A}_0(\bm{\omega})\}$ is the zero tensor. Suppose that for any $\mathbf{x}\in\Xi(\widetilde{\mathcal{A}})$, at least one of the following two conditions is satisfied:
\begin{enumerate}[{\rm (1)}]
\item For some $i\in\mathbb{J}(\mathbf{x})$, $\mathbb{E}\{((\mathcal{A}_0(\bm{\omega})\mathbf{x}^{N-1})_i)^2\}>0$;
\item For some $i\in\mathbb{I}(\mathbf{x})$, $\mathbb{P}\{\bm{\omega}:(\mathcal{A}_0(\bm{\omega})\mathbf{x}^{N-1})_i<-b\}>0$ for any $b>0$.
\end{enumerate}
Then $\mathcal{A}(\cdot)=\widetilde{\mathcal{A}}+\mathcal{A}_0(\cdot)$ is a stochastic $R_0$ tensor.
\end{proposition}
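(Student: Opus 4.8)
The plan is to verify condition (ii) of Theorem \ref{STCP:theorem2} for every nonzero $\mathbf{x}\in\mathbb{R}^I_+$, since that condition is equivalent to $\mathcal{A}(\cdot)=\widetilde{\mathcal{A}}+\mathcal{A}_0(\cdot)$ being a stochastic $R_0$ tensor. The single structural fact I would exploit throughout is that $\mathbb{E}\{\mathcal{A}_0(\bm{\omega})\}$ is the zero tensor, so that $\mathbb{E}\{(\mathcal{A}(\bm{\omega})\mathbf{x}^{N-1})_i\}=(\widetilde{\mathcal{A}}\mathbf{x}^{N-1})_i$ for every coordinate $i$ and every fixed $\mathbf{x}$. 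I would then split the argument according to whether or not $\mathbf{x}\in\Xi(\widetilde{\mathcal{A}})$, using the elementary probabilistic facts that a random variable with nonzero mean cannot vanish almost everywhere, that one with negative mean must be negative on a set of positive measure, and that one with positive second moment cannot vanish almost everywhere.

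First I would treat the case $\mathbf{x}\notin\Xi(\widetilde{\mathcal{A}})$. Unwinding the definition of $\Xi$, this means that either $(\widetilde{\mathcal{A}}\mathbf{x}^{N-1})_i\neq0$ for some $i\in\mathbb{J}(\mathbf{x})$, or $(\widetilde{\mathcal{A}}\mathbf{x}^{N-1})_i<0$ for some $i\in\mathbb{I}(\mathbf{x})$. In the first situation $\mathbb{E}\{(\mathcal{A}(\bm{\omega})\mathbf{x}^{N-1})_i\}\neq0$, so $(\mathcal{A}(\bm{\omega})\mathbf{x}^{N-1})_i$ cannot be zero almost everywhere, giving $\mathbb{P}\{\bm{\omega}:(\mathcal{A}(\bm{\omega})\mathbf{x}^{N-1})_i\neq0\}>0$, which is (a). In the second situation $\mathbb{E}\{(\mathcal{A}(\bm{\omega})\mathbf{x}^{N-1})_i\}<0$, which forces $\mathbb{P}\{\bm{\omega}:(\mathcal{A}(\bm{\omega})\mathbf{x}^{N-1})_i<0\}>0$, which is (b). Notably, the hypotheses (1)--(2) are not needed here; the deterministic mean alone settles this case.

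Next I would treat the case $\mathbf{x}\in\Xi(\widetilde{\mathcal{A}})$, where $(\widetilde{\mathcal{A}}\mathbf{x}^{N-1})_i=0$ for $i\in\mathbb{J}(\mathbf{x})$ and $(\widetilde{\mathcal{A}}\mathbf{x}^{N-1})_i\geq0$ for $i\in\mathbb{I}(\mathbf{x})$, and where by assumption one of (1), (2) holds. If (1) holds for some $i\in\mathbb{J}(\mathbf{x})$, then since the deterministic part vanishes at that coordinate we have $(\mathcal{A}(\bm{\omega})\mathbf{x}^{N-1})_i=(\mathcal{A}_0(\bm{\omega})\mathbf{x}^{N-1})_i$, and the positive second moment forces $\mathbb{P}\{\bm{\omega}:(\mathcal{A}(\bm{\omega})\mathbf{x}^{N-1})_i\neq0\}>0$, i.e. (a). If (2) holds for some $i\in\mathbb{I}(\mathbf{x})$, I would write $c:=(\widetilde{\mathcal{A}}\mathbf{x}^{N-1})_i\geq0$, so that $(\mathcal{A}(\bm{\omega})\mathbf{x}^{N-1})_i<0$ is equivalent to $(\mathcal{A}_0(\bm{\omega})\mathbf{x}^{N-1})_i<-c$; applying (2) with $b=c$ when $c>0$, and with any fixed $b>0$ (using the inclusion of events) when $c=0$, yields $\mathbb{P}\{\bm{\omega}:(\mathcal{A}(\bm{\omega})\mathbf{x}^{N-1})_i<0\}>0$, i.e. (b).

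The delicate point will be this last step: the deterministic component contributes a nonnegative shift $c$ that must be absorbed, and it is precisely the clause ``for any $b>0$'' in hypothesis (2) that guarantees the left tail of $(\mathcal{A}_0(\bm{\omega})\mathbf{x}^{N-1})_i$ is heavy enough to push the perturbed coordinate strictly below zero no matter how large $c$ happens to be. Once these two cases are seen to exhaust all nonzero $\mathbf{x}\in\mathbb{R}^I_+$, condition (ii) of Theorem \ref{STCP:theorem2} holds, and therefore $\mathcal{A}(\cdot)$ is a stochastic $R_0$ tensor.
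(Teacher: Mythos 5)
Your proposal is correct and follows the same route as the paper's own (very terse) proof: verify condition (ii) of Theorem \ref{STCP:theorem2} by splitting on whether $\mathbf{x}\in\Xi(\widetilde{\mathcal{A}})$, using the mean-zero property of $\mathcal{A}_0(\cdot)$ for $\mathbf{x}\notin\Xi(\widetilde{\mathcal{A}})$ and hypotheses (1)--(2) for $\mathbf{x}\in\Xi(\widetilde{\mathcal{A}})$. Your write-up actually supplies the details (the linearity-of-expectation step and the absorption of the nonnegative shift $c=(\widetilde{\mathcal{A}}\mathbf{x}^{N-1})_i$ via the ``for any $b>0$'' clause) that the paper leaves implicit.
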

\begin{proof}
For $\mathbf{x}\in\Xi(\widetilde{\mathcal{A}})$, these two conditions imply that the conditions in Theorem \ref{STCP:theorem2} (ii) hold for $\mathcal{A}(\cdot)$. For $\mathbf{x}\notin\Xi(\widetilde{\mathcal{A}})$, the same conditions also hold trivially. So $\mathcal{A}(\cdot)=\widetilde{\mathcal{A}}+\mathcal{A}_0(\cdot)$ is a stochastic $R_0$ tensor.
\end{proof}

Proposition \ref{STCP:proposition2} suggests a way to obtain a stochastic $R_0$ tensor $\mathcal{A}(\cdot)$ from an arbitrary tensor $\widetilde{\mathcal{A}}\in T_{N,I}$. Specially, we can construct a simple stochastic tensor $\mathcal{A}_0(\cdot)$ such that $\widetilde{\mathcal{A}}+\mathcal{A}_0(\cdot)$ is a stochastic $R_0$ tensor, as illustrated in the following example.
\begin{example}
Consider $\mathcal{A}\in T_{3,5}$ with nonzero entries
\begin{equation*}
\begin{split}
a_{133}&=\ \ 1,\quad a_{144}=-2,\quad a_{155}=-3,\quad
a_{233}=\ \ 1,\quad a_{244}=-6,\quad a_{255}=-3,\\
a_{313}&=-1,\quad a_{323}=-1,\quad a_{414}=\ \ 2,\quad
a_{424}=\ \ 6,\quad a_{515}=\ \ 3,\quad a_{525}=\ \ 3.
\end{split}
\end{equation*}
Clearly, $\mathcal{A}$ is not an $R_0$ tensor and we have
\begin{equation*}
\begin{split}
&\Xi_1(\mathcal{A}):=\left\{\mathbf{x}\in\mathbb{R}_+^5:\mathbf{x}=(0,0,\lambda,\alpha,\beta)^\top,
\lambda>0,\alpha,\beta\geq0,\lambda^2-6\alpha^2-3\beta^2\geq0\right\};\\
&\Xi_2(\mathcal{A}):=\left\{\mathbf{x}\in\mathbb{R}_+^5:\mathbf{x}=(\alpha,\beta,0,0,0)^\top,
\alpha,\beta\geq0\right\}.
\end{split}
\end{equation*}
Note that for any $\mathbf{x}\in\Xi_2(\mathcal{A})$, $\mathcal{A}\mathbf{x}^2$ is the zero vector. Hence $\Xi_2(\mathcal{A})$ does not satisfies the assumption of Proposition \ref{STCP:proposition2}. Let $\omega_0\in\mathbb{R}$ be a random variable obeying the standard normal distribution. Suppose that the nonzero entries of $\mathcal{A}_0(\omega_0)\in T_{3,5}$ are
$$\mathcal{A}_0(\omega_0)(1,3,3)=0.5\omega_0,\quad
\mathcal{A}_0(\omega_0)(3,1,3)=-0.5\omega_0,\quad
\mathcal{A}_0(\omega_0)(3,3,1)=-0.5\omega_0.$$
Then for any $b>0$, $\mathbb{P}\{\omega_0:(\mathcal{A}_0(\omega_0)\mathbf{x}^2)_1<-b\}>0$ holds for any $\mathbf{x}\in\Xi_1(\mathcal{A})$. Hence by Proposition \ref{STCP:proposition2}, $\mathcal{A}+\mathcal{A}_0$ is a stochastic $R_0$ tensor.
\end{example}

The following proposition shows that the sum of a stochastic $R_0$ tensor $\mathcal{A}(\cdot)$ and a tensor $\mathcal{A}_1(\cdot)$ with $\mathbb{E}\{\mathcal{A}_1(\omega_1)\}$ being the zero tensor yields a stochastic $R_0$ tensor.
\begin{proposition}
Let $\bm{\omega}=(\omega_0,\omega_1)$ and $\widehat{\mathcal{A}}(\bm{\omega})=\mathcal{A}(\omega_0)+\mathcal{A}_1(\omega_1)$, where $\mathcal{A}(\cdot)$ is a stochastic $R_0$ tensor, $\mathbb{E}\{\mathcal{A}_1(\omega_1)\}$ is the zero tensor and $\mathcal{A}(\omega_0)$ is independent of $\mathcal{A}_1(\omega_1)$. Then $\widehat{\mathcal{A}}(\bm{\omega})$ is a stochastic $R_0$ tensor.
\end{proposition}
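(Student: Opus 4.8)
The plan is to verify condition (ii) of Theorem \ref{STCP:theorem2} for $\widehat{\mathcal{A}}(\bm{\omega})$, exploiting the fact that the same condition already holds for the stochastic $R_0$ tensor $\mathcal{A}(\cdot)$. Fix any nonzero $\mathbf{x}\in\mathbb{R}^I_+$ and, for each index $i$, write $u_i(\omega_0)=(\mathcal{A}(\omega_0)\mathbf{x}^{N-1})_i$ and $w_i(\omega_1)=(\mathcal{A}_1(\omega_1)\mathbf{x}^{N-1})_i$, so that $(\widehat{\mathcal{A}}(\bm{\omega})\mathbf{x}^{N-1})_i=u_i(\omega_0)+w_i(\omega_1)$. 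Since $\mathbb{E}\{\mathcal{A}_1(\omega_1)\}$ is the zero tensor and $(\mathcal{A}_1(\omega_1)\mathbf{x}^{N-1})_i$ is linear in $\mathcal{A}_1(\omega_1)$, we have $\mathbb{E}\{w_i\}=0$ for every $i$; and since $\mathcal{A}(\omega_0)$ is independent of $\mathcal{A}_1(\omega_1)$, the random variables $u_i$ and $w_i$ are independent. By Theorem \ref{STCP:theorem2}(ii) applied to $\mathcal{A}(\cdot)$, at least one of (a), (b) holds for $\mathbf{x}$, and I will show the corresponding condition is inherited by $\widehat{\mathcal{A}}(\bm{\omega})$.

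Suppose first that (a) holds for $\mathcal{A}(\cdot)$, i.e. $\mathbb{P}\{u_i\neq0\}>0$ for some $i\in\mathbb{J}(\mathbf{x})$. The cleanest route is a second-moment estimate. By independence together with $\mathbb{E}\{w_i\}=0$, the cross term vanishes, $\mathbb{E}\{u_iw_i\}=\mathbb{E}\{u_i\}\mathbb{E}\{w_i\}=0$, so that $\mathbb{E}\{(u_i+w_i)^2\}=\mathbb{E}\{u_i^2\}+\mathbb{E}\{w_i^2\}\geq\mathbb{E}\{u_i^2\}>0$. Hence $\mathbb{P}\{(u_i+w_i)\neq0\}>0$, which is exactly condition (a) of Theorem \ref{STCP:theorem2}(ii) for $\widehat{\mathcal{A}}(\bm{\omega})$ at the same index $i\in\mathbb{J}(\mathbf{x})$.

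Now suppose (b) holds for $\mathcal{A}(\cdot)$, i.e. $\mathbb{P}\{u_i<0\}>0$ for some $i\in\mathbb{I}(\mathbf{x})$. Here I distinguish two subcases according to the perturbation $w_i$. If $w_i=0$ almost everywhere, then $u_i+w_i=u_i$ a.e. and condition (b) transfers verbatim. Otherwise $w_i$ is not a.e. zero, and since $\mathbb{E}\{w_i\}=0$ this forces $\mathbb{P}\{w_i<0\}>0$; using independence of $u_i$ and $w_i$ one gets $\mathbb{P}\{u_i+w_i<0\}\geq\mathbb{P}\{u_i<0,\,w_i<0\}=\mathbb{P}\{u_i<0\}\,\mathbb{P}\{w_i<0\}>0$, again giving condition (b) for $\widehat{\mathcal{A}}(\bm{\omega})$. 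In either case Theorem \ref{STCP:theorem2}(ii) holds for $\widehat{\mathcal{A}}(\bm{\omega})$, so $\widehat{\mathcal{A}}(\bm{\omega})$ is a stochastic $R_0$ tensor.

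The point where care is needed is the treatment of (b): unlike (a), the sign condition is not controlled by a second moment, so a zero-mean perturbation $w_i$ could in principle move $u_i+w_i$ away from being negative. The resolution is the dichotomy above, namely that a zero-mean random variable is either identically zero (perturbing nothing) or attains negative values with positive probability, combined with independence to guarantee that the joint event $\{u_i<0,\,w_i<0\}$ has positive probability. I expect this subcase analysis, rather than any heavy computation, to be the only genuinely delicate part of the argument.
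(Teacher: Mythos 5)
Your proof is correct, but it takes a more direct route than the paper. The paper first splits on whether $\widetilde{\mathcal{A}}:=\mathbb{E}\{\mathcal{A}(\omega_0)\}$ is an $R_0$ tensor: if so, it invokes Proposition \ref{STCP:proposition1}; if not, it writes $\mathcal{A}(\omega_0)=\widetilde{\mathcal{A}}+\mathcal{A}_0(\omega_0)$, works on the set $\Xi(\widetilde{\mathcal{A}})$, and checks that the two conditions of Proposition \ref{STCP:proposition2} are preserved when $\mathcal{A}_1(\omega_1)$ is added to $\mathcal{A}_0(\omega_0)$. You instead bypass $\widetilde{\mathcal{A}}$, $\Xi$, and Proposition \ref{STCP:proposition2} entirely and verify Theorem \ref{STCP:theorem2}(ii) for $\widehat{\mathcal{A}}$ directly, transferring whichever of (a) or (b) holds for $\mathcal{A}(\cdot)$ at a given nonzero $\mathbf{x}\in\mathbb{R}^I_+$. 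The two key probabilistic estimates are essentially the same as the paper's (the vanishing cross term $\mathbb{E}\{u_iw_i\}=\mathbb{E}\{u_i\}\mathbb{E}\{w_i\}=0$ for the second-moment step, and the product lower bound on the probability of a joint negativity event for the sign step), but your structure avoids the translation between the stochastic $R_0$ property of $\mathcal{A}(\cdot)$ and the hypotheses of Proposition \ref{STCP:proposition2} that the paper leaves implicit; in that sense your argument is cleaner and more self-contained. Two small remarks: in case (a) you should note that $\mathbb{E}\{u_i^2\}$ and $\mathbb{E}\{w_i^2\}$ may be $+\infty$ under the paper's standing first-moment assumption, but the identity $\mathbb{E}\{(u_i+w_i)^2\}=\mathbb{E}\{u_i^2\}+\mathbb{E}\{w_i^2\}$ still holds in $[0,+\infty]$ because the cross term is finite by independence; and in case (b) your dichotomy can be compressed by observing that $\mathbb{E}\{w_i\}=0$ always forces $\mathbb{P}\{w_i\leq0\}>0$, so $\mathbb{P}\{u_i+w_i<0\}\geq\mathbb{P}\{u_i<0\}\,\mathbb{P}\{w_i\leq0\}>0$ in one stroke, which is exactly the inequality the paper uses.
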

\begin{proof}
If $\widetilde{\mathcal{A}}:=\mathbb{E}\{\mathcal{A}(\omega_0)\}$ is an $R_0$ tensor, then from $\mathbb{E}\{\mathcal{A}_1(\omega_1)\}=\mathcal{O}$ and Proposition \ref{STCP:proposition1}, $\mathcal{A}(\omega_0)+\mathcal{A}_1(\omega_1)$ is a stochastic $R_0$ tensor. Otherwise, let $\mathcal{A}_0(\omega_0)=\mathcal{A}(\omega_0)-\widetilde{\mathcal{A}}$ and choose any $\mathbf{x}\in\Xi(\widetilde{\mathcal{A}})$. Suppose that the first condition of Proposition \ref{STCP:proposition2} holds for $\mathcal{A}_0(\omega_0)$. Since $\mathcal{A}(\omega_0)$ is independent of $\mathcal{A}_1(\omega_1)$, we have
\begin{equation*}
\mathbb{E}\{(((\mathcal{A}_0(\omega_0)+\mathcal{A}_1(\omega_1))\mathbf{x}^{N-1})_i)^2\}
=\mathbb{E}\{((\mathcal{A}_0(\omega_0)\mathbf{x}^{N-1})_i)^2\}
+\mathbb{E}\{((\mathcal{A}_1(\omega_1)\mathbf{x}^{N-1})_i)^2\}>0
\end{equation*}
for some $i\in\mathbb{J}(\mathbf{x})$. Now, suppose that the second condition of Proposition \ref{STCP:proposition2} holds for $\mathcal{A}_0(\omega_0)$, i.e., $\mathbb{P}\{\bm{\omega}:(\mathcal{A}_0(\omega_0)\mathbf{x}^{N-1})_i<-b\}>0$ for some $i\in\mathbb{I}(\mathbf{x})$. Note that
\begin{equation*}
\begin{split}
\mathbb{P}\{\bm{\omega}:&((\mathcal{A}_0(\omega_0)+\mathcal{A}_1(\omega_1))\mathbf{x}^{N-1})_i<-b\}\\
&\geq\mathbb{P}\{(\omega_0,\omega_1):(\mathcal{A}_0(\omega_0)\mathbf{x}^{N-1})_i<-b\text{ and }(\mathcal{A}_1(\omega_1)\mathbf{x}^{N-1})_i\leq0\}\\
&=\mathbb{P}\{\omega_0:(\mathcal{A}_0(\omega_0)\mathbf{x}^{N-1})_i<-b\}
\mathbb{P}\{\omega_1:(\mathcal{A}_1(\omega_1)\mathbf{x}^{N-1})_i\leq0\}.
\end{split}
\end{equation*}
Since $\mathbb{E}\{(\mathcal{A}_1(\omega_1)\mathbf{x}^{N-1})_i\}=0$, we have $\mathbb{P}\{\omega_1:(\mathcal{A}_1(\omega_1)\mathbf{x}^{N-1})_i\leq0\}>0$. Thus, we have $$\mathbb{P}\{\bm{\omega}:((\mathcal{A}_0(\omega_0)+\mathcal{A}_1(\omega_1))\mathbf{x}^{N-1})_i<-b\}>0,$$
i.e., the second condition of Proposition \ref{STCP:proposition2} holds for $\mathcal{A}_0(\omega_0)+\mathcal{A}_1(\omega_1)$. Since
\begin{equation*}
\widehat{\mathcal{A}}(\bm{\omega})=\mathcal{A}(\omega_0)+\mathcal{A}_1(\omega_1)
=\widetilde{\mathcal{A}}+\mathcal{A}_0(\omega)+\mathcal{A}_1(\omega_1),
\end{equation*}
Proposition \ref{STCP:proposition2} implies that $\widehat{\mathcal{A}}(\bm{\omega})$ is a stochastic $R_0$ tensor.
\end{proof}
\subsection{A sufficient and necessary condition for ${\rm ERM}(\mathcal{A}(\cdot),\mathbf{q}(\cdot))$}
\begin{theorem}\label{STCP:theorem3}
Let $\mathbf{q}(\cdot)$ be arbitrary. Then $G(\mathbf{x})\rightarrow\infty$ as $\|\mathbf{x}\|_2\rightarrow\infty$ with $\mathbf{x}\in\mathbb{R}^{I}_+$ if and only if $\mathcal{A}(\cdot)$ is a stochastic $R_0$ tensor.
\end{theorem}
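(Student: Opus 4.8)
The plan is to prove the two implications separately, both by contraposition. For the direction ``coercivity of $G$ $\Rightarrow$ $\mathcal{A}(\cdot)$ is a stochastic $R_0$ tensor'' I would exhibit a ray along which $G$ stays bounded whenever the tensor fails to be stochastic $R_0$. For the converse direction I would extract a normalized accumulation point of a hypothetical unbounded sequence keeping $G$ bounded, and use Fatou's lemma to show that this accumulation point violates the characterization in Theorem \ref{STCP:theorem2}(ii).

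For the necessity read as coercivity $\Rightarrow$ stochastic $R_0$, suppose $\mathcal{A}(\cdot)$ is not a stochastic $R_0$ tensor. By the negation of Theorem \ref{STCP:theorem2}(ii) there is a nonzero $\mathbf{x}_0\in\mathbb{R}^I_+$ with $(\mathcal{A}(\bm{\omega})\mathbf{x}_0^{N-1})_i=0$ a.e.\ for every $i\in\mathbb{J}(\mathbf{x}_0)$ and $(\mathcal{A}(\bm{\omega})\mathbf{x}_0^{N-1})_i\geq0$ a.e.\ for every $i\in\mathbb{I}(\mathbf{x}_0)$. Along the ray $\lambda\mathbf{x}_0$ ($\lambda>0$) I would use $\mathcal{A}(\bm{\omega})(\lambda\mathbf{x}_0)^{N-1}=\lambda^{N-1}\mathcal{A}(\bm{\omega})\mathbf{x}_0^{N-1}$ and check componentwise that the ``min'' residual stays bounded by $|(\mathbf{q}(\bm{\omega}))_i|$: for $i\in\mathbb{J}(\mathbf{x}_0)$ the first slot equals $(\mathbf{q}(\bm{\omega}))_i$ while the second slot $\lambda(\mathbf{x}_0)_i\to+\infty$, and for $i\in\mathbb{I}(\mathbf{x}_0)$ the second slot is $0$ while the first slot is $\geq(\mathbf{q}(\bm{\omega}))_i$; in both cases $|(\Phi_1(\lambda\mathbf{x}_0,\bm{\omega}))_i|\leq|(\mathbf{q}(\bm{\omega}))_i|$. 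Hence $G(\lambda\mathbf{x}_0)\leq\mathbb{E}\{\|\mathbf{q}(\bm{\omega})\|_2^2\}$ for all $\lambda>0$, a finite constant independent of $\lambda$, so $G$ is not coercive. This mirrors the ray argument already used in the proof of the second theorem of Section \ref{STCP:section3}.

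For the sufficiency, suppose $G$ is not coercive, so there is $\{\mathbf{x}_k\}\subset\mathbb{R}^I_+$ with $\|\mathbf{x}_k\|_2\to\infty$ and $G(\mathbf{x}_k)\leq M<\infty$. Put $t_k=\|\mathbf{x}_k\|_2$ and $\mathbf{v}_k=\mathbf{x}_k/t_k$; passing to a subsequence I may assume $\mathbf{v}_k\to\bar{\mathbf{v}}$ with $\bar{\mathbf{v}}\in\mathbb{R}^I_+$, $\|\bar{\mathbf{v}}\|_2=1$, and write $a_i(\bm{\omega})=(\mathcal{A}(\bm{\omega})\bar{\mathbf{v}}^{N-1})_i$. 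The core step is to apply Fatou's lemma to the nonnegative integrands $g(\mathbf{x}_k,\bm{\omega})=\|\Phi_1(\mathbf{x}_k,\bm{\omega})\|_2^2$, giving $\int_\Omega\liminf_k g(\mathbf{x}_k,\bm{\omega})\,dF(\bm{\omega})\leq\liminf_k G(\mathbf{x}_k)\leq M$, so that $\liminf_k g(\mathbf{x}_k,\bm{\omega})<\infty$ for a.e.\ $\bm{\omega}$. Using $(\Phi_1(\mathbf{x}_k,\bm{\omega}))_i=\min\!\big(t_k^{N-1}(\mathcal{A}(\bm{\omega})\mathbf{v}_k^{N-1})_i+(\mathbf{q}(\bm{\omega}))_i,\,t_k v_{k,i}\big)$ together with the pointwise convergence $(\mathcal{A}(\bm{\omega})\mathbf{v}_k^{N-1})_i\to a_i(\bm{\omega})$, I would show that on $\{\bm{\omega}:a_i(\bm{\omega})<0\}$ the first slot tends to $-\infty$, hence $(\Phi_1)_i\to-\infty$ and $\liminf_k g=\infty$; this forces that set to be null, so $a_i(\bm{\omega})\geq0$ a.e.\ for every $i$. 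Likewise, for $i\in\mathbb{J}(\bar{\mathbf{v}})$ (where $\bar{v}_i>0$, so both slots tend to $+\infty$ once $a_i(\bm{\omega})>0$) the set $\{\bm{\omega}:a_i(\bm{\omega})>0\}$ must be null, so $a_i(\bm{\omega})=0$ a.e. Thus a.e.\ $\mathcal{A}(\bm{\omega})\bar{\mathbf{v}}^{N-1}\in\mathbb{R}^I_+$ and $\mathcal{A}(\bm{\omega})\bar{\mathbf{v}}^N=\sum_i\bar{v}_i a_i(\bm{\omega})=0$, while $\bar{\mathbf{v}}\neq\mathbf{0}_I$, contradicting that $\mathcal{A}(\cdot)$ is a stochastic $R_0$ tensor.

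The hard part is this last direction, specifically combining the Fatou step with the componentwise divergence analysis: one must ensure that the degree-$(N-1)$ tensor term dominates both the inhomogeneous shift $(\mathbf{q}(\bm{\omega}))_i$ and the degree-one term $t_k v_{k,i}$, so that the sign of $a_i(\bm{\omega})$ alone controls the blow-up of each residual component, and that the exceptional $\bm{\omega}$-sets are genuinely measurable (which follows from measurability of $\mathcal{A}(\cdot)$ and $\mathbf{q}(\cdot)$) and null. The necessity direction is routine once the right $\mathbf{x}_0$ is produced by Theorem \ref{STCP:theorem2}; the only mild ingredient needed there is square-integrability of $\mathbf{q}(\cdot)$ so that the bounding constant $\mathbb{E}\{\|\mathbf{q}(\bm{\omega})\|_2^2\}$ is finite.
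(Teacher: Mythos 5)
Your proof is correct, and while your necessity half (the ray $\lambda\mathbf{x}_0$ on which $|(\Phi_1(\lambda\mathbf{x}_0,\bm{\omega}))_i|\leq|(\mathbf{q}(\bm{\omega}))_i|$, hence $G(\lambda\mathbf{x}_0)\leq\mathbb{E}\{\|\mathbf{q}(\bm{\omega})\|_2^2\}$) is essentially the paper's ``only if'' argument stripped of the ${\rm sign}$-matrix identity (\ref{STCP:equation3}), your sufficiency half takes a genuinely different route. The paper fixes a unit vector $\mathbf{x}\in\mathbb{R}^I_+$, invokes the characterization in Theorem \ref{STCP:theorem2}(ii), and runs a case analysis over the events $\Omega_1,\dots,\Omega_4$ (according to whether $(\mathcal{A}(\bm{\omega})\mathbf{x}^{N-1})_i$ is less than, greater than, or equal to $x_i$, or is negative) to show $G(\lambda\mathbf{x})\rightarrow\infty$ as $\lambda\rightarrow\infty$ along each fixed ray, and then asserts coercivity. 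You instead argue by contradiction on an arbitrary sequence $\mathbf{x}_k=t_k\mathbf{v}_k$ with $G(\mathbf{x}_k)\leq M$, extract an accumulation direction $\overline{\mathbf{v}}$, and use Fatou's lemma to force $\liminf_k\|\Phi_1(\mathbf{x}_k,\bm{\omega})\|_2^2<\infty$ a.e., whence the componentwise divergence analysis yields $\mathcal{A}(\bm{\omega})\overline{\mathbf{v}}^{N-1}\in\mathbb{R}^I_+$ and $\mathcal{A}(\bm{\omega})\overline{\mathbf{v}}^{N}=0$ a.e., contradicting the definition of a stochastic $R_0$ tensor directly. Your version buys two things: it bypasses Theorem \ref{STCP:theorem2}(ii) entirely, and, more importantly, it handles sequences whose directions vary with $k$, thereby closing the unaddressed passage in the paper from ray-wise divergence ($G(\lambda\mathbf{x})\rightarrow\infty$ for each fixed unit $\mathbf{x}$) to the stated coercivity ($G(\mathbf{x})\rightarrow\infty$ as $\|\mathbf{x}\|_2\rightarrow\infty$), which does not follow without some uniformity over directions. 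What the paper's route buys in exchange is explicit quantitative lower bounds on $G(\lambda\mathbf{x})$ over the sets $\Omega_1,\dots,\Omega_4$, which are reused in the two propositions that follow the theorem. Finally, both your argument and the paper's tacitly require $\mathbb{E}\{\|\mathbf{q}(\bm{\omega})\|_2^2\}<\infty$, which is slightly stronger than the standing assumption $\mathbb{E}\{\|\mathbf{q}(\bm{\omega})\|_2\}<\infty$; you are right to flag this, and it is not a defect of your proof relative to the paper's.
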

\begin{proof}
First, we prove the ``if'' part. For a given $\mathbf{x}\in\mathbb{R}^I$, we denote $|\mathbf{x}|=(|x_1|,|x_2|,\dots,|x_I|)^\top$ and ${\rm sign}(\mathbf{x})=({\rm sign}(x_1),{\rm sign}(x_2)\dots,{\rm sign}(x_I))^\top$, where
\begin{equation*}
{\rm sign}(x_i)=\begin{cases}
\begin{split}
1,\quad& x_{i}>0,\\
0,\quad & x_i=0,\\
-1,\quad&x_i<0.
\end{split}
\end{cases}
\end{equation*}
Note that for any $a,b\in\mathbb{R}$, it follows from \cite[Theorem 3.1]{slcp_fang_SJO_2007} that
\begin{equation*}
\begin{split}
2\min(a,b)&=a+b-{\rm sign}(a-b)(a-b)\\
&=(1-{\rm sign}(a-b))a+(1+{\rm sign}(a-b))b,
\end{split}
\end{equation*}
and
\begin{equation*}
\begin{split}
4(\min(a,b))^2&=a(1-{\rm sign}(a-b))^2a+b(1+{\rm sign}(a-b))^2b+2b(1-{\rm sign}(a-b)^2)a\\
&=2a(1-{\rm sign}(a-b))a+2b(1+{\rm sign}(a-b))b.
\end{split}
\end{equation*}
For any $\mathbf{x}\in\mathbb{R}^I$ and $\bm{\omega}\in \Omega$, we define the diagonal matrix
\begin{equation*}
\mathbf{D}(\mathbf{x},\bm{\omega})={\rm diag}({\rm sign}(\mathcal{A}(\bm{\omega})\mathbf{x}^{N-1}+\mathbf{q}(\bm{\omega})-\mathbf{x})).\footnote{For a given $\mathbf{x}\in\mathbb{R}^I$, ${\rm diag}(\mathbf{x})\in\mathbb{R}^{I\times I}$ is a diagonal matrix such that its diagonal entries equal the entries of $\mathbf{x}$.}
\end{equation*}
Then we have
\begin{eqnarray}\label{STCP:equation3}
\|\Phi(\mathbf{x},\bm{\omega})\|_2^2&=&\frac{1}{2}(\mathcal{A}(\bm{\omega})\mathbf{x}^{N-1}+\mathbf{q}(\bm{\omega}))^\top
(\mathbf{I}_I-\mathbf{D}(\mathbf{x},\bm{\omega}))(\mathcal{A}(\bm{\omega})\mathbf{x}^{N-1}+\mathbf{q}(\bm{\omega}))\nonumber\\
&&+\frac{1}{2}\mathbf{x}^\top(\mathbf{I}_I+\mathbf{D}(\mathbf{x},\bm{\omega}))\mathbf{x}.
\end{eqnarray}

Consider an arbitrary $\mathbf{x}\in\mathbb{R}^I$ with $\|\mathbf{x}\|_2=1$. Suppose condition (a) in Theorem \ref{STCP:theorem2} (ii) holds. Choose $i\in\mathbb{J}(\mathbf{x})$ such that $\mathbb{P}\{\bm{\omega}:(\mathcal{A}(\bm{\omega})\mathbf{x}^{N-1})_i\neq0\}>0$. Then there exists a sufficiently large $K>0$ such that $\mathbb{P}\{\bm{\omega}:(\mathcal{A}(\bm{\omega})\mathbf{x}^{N-1})_i\neq0,|(\mathbf{q}(\bm{\omega}))_i|\leq K\}>0$.

First, we consider the case where $\mathbb{P}\{\bm{\omega}:(\mathcal{A}(\bm{\omega})\mathbf{x}^{N-1})_i<x_i,|(\mathbf{q}(\bm{\omega}))_i|\leq K\}>0$. Let
$$\Omega_1:=\{\bm{\omega}:(\mathcal{A}(\bm{\omega})\mathbf{x}^{N-1})_i<(1-\delta)x_i,|(\mathbf{q}(\bm{\omega}))_i|\leq K\},$$
where $\delta>0$. Then we have $\mathbb{P}\{\Omega_1\}>0$ whenever $\delta$ is sufficiently small. Moreover, for any sufficiently large $\lambda>0$, ${\rm sign}(\lambda^{N-1}(\mathcal{A}(\bm{\omega})\mathbf{x}^{N-1})_i+(\mathbf{q}(\bm{\omega}))_i-\lambda x_i)=-1$ for any $\bm{\omega}\in\Omega_1$. Therefore, by (\ref{STCP:equation4}) and (\ref{STCP:equation3}), we have
\begin{equation*}
G(\lambda\mathbf{x})>\int_{\Omega_1}(\lambda^{N-1}(\mathcal{A}(\bm{\omega})\mathbf{x}^{N-1})_i+(\mathbf{q}(\bm{\omega}))_i)^2dF(\bm{\omega})\rightarrow\infty\text{ as }\lambda\rightarrow\infty.
\end{equation*}

Next, we consider the case where $\mathbb{P}\{\bm{\omega}:(\mathcal{A}(\bm{\omega})\mathbf{x}^{N-1})_i>x_i,|(\mathbf{q}(\bm{\omega}))_i|\leq K\}>0$. Let
$$\Omega_2:=\{\bm{\omega}:(\mathcal{A}(\bm{\omega})\mathbf{x}^{N-1})_i<(1-\delta)x_i,|(\mathbf{q}(\bm{\omega}))_i|\leq K\}.$$
Then we have $\mathbb{P}\{\Omega_1\}>0$ whenever $\delta$ is sufficiently small. Moreover, for any sufficiently large $\lambda>0$, ${\rm sign}(\lambda^{N-1}(\mathcal{A}(\bm{\omega})\mathbf{x}^{N-1})_i+(\mathbf{q}(\bm{\omega}))_i-\lambda x_i)=1$ for any $\bm{\omega}\in\Omega_2$. Therefore, we have
\begin{equation*}
G(\lambda\mathbf{x})>\int_{\Omega_2}(\lambda x_i)^2dF(\bm{\omega})\rightarrow\infty\text{ as }\lambda\rightarrow\infty.
\end{equation*}

Finally, we consider the case where $\mathbb{P}\{\bm{\omega}:(\mathcal{A}(\bm{\omega})\mathbf{x}^{N-1})_i=x_i,|(\mathbf{q}(\bm{\omega}))_i|\leq K\}>0$. Let
$$\Omega_3:=\{\bm{\omega}:(\mathcal{A}(\bm{\omega})\mathbf{x}^{N-1})_i=x_i,|(\mathbf{q}(\bm{\omega}))_i|\leq K\}.$$
Then we have
\begin{equation*}
G(\lambda\mathbf{x})\geq\int_{\Omega_3}\{(\lambda x_i+(\mathbf{q}(\bm{\omega}))_i)^21_{\{(\mathbf{q}(\bm{\omega}))_i<0\}}+(\lambda x_i)^21_{\{(\mathbf{q}(\bm{\omega}))_i\geq0\}}\}dF(\bm{\omega})\rightarrow\infty\text{ as }\lambda\rightarrow\infty,
\end{equation*}
where for $(\mathbf{q}(\bm{\omega}))_i<0$, $1_{\{(\mathbf{q}(\bm{\omega}))_i<0\}}=1$ and for $(\mathbf{q}(\bm{\omega}))_i\geq0$, $1_{\{(\mathbf{q}(\bm{\omega}))_i\geq0\}}=1$. Hence we see that $G(\lambda\mathbf{x})\rightarrow\infty$ as $\lambda\rightarrow\infty$.

Now, we suppose condition (b) in Theorem \ref{STCP:theorem2} (ii) holds. Choose $i\in\mathbb{I}(\mathbf{x})$ such that $\mathbb{P}\{\bm{\omega}:(\mathcal{A}(\bm{\omega})\mathbf{x}^{N-1})_i<0\}>0$. Let
$$\Omega_4:=\{\bm{\omega}:(\mathcal{A}(\bm{\omega})\mathbf{x}^{N-1})_i<-\delta,|(\mathbf{q}(\bm{\omega}))_i|<K\}.$$
Then we have $\mathbb{P}\{\Omega_4\}>0$ for sufficiently small $\delta>0$ and sufficiently large $K>0$. Moreover, for any $\lambda>0$ large enough, $\lambda^{N-1}(\mathcal{A}(\bm{\omega})\mathbf{x}^{N-1})_i+(\mathbf{q}(\bm{\omega}))_i<0$ for $\bm{\omega}\in\Omega_4$. Thus we have
\begin{equation*}
\begin{split}
&(1-{\rm sign}((\lambda^{N-1}\mathcal{A}(\bm{\omega})\mathbf{x}^{N-1})_i+(\mathbf{q}(\bm{\omega}))_i))
((\lambda^{N-1}\mathcal{A}(\bm{\omega})\mathbf{x}^{N-1})_i+(\mathbf{q}(\bm{\omega}))_i)^2\\
&\quad=2((\lambda^{N-1}\mathcal{A}(\bm{\omega})\mathbf{x}^{N-1})_i+(\mathbf{q}(\bm{\omega}))_i)^2,
\end{split}
\end{equation*}
which yields
\begin{equation*}
G(\lambda\mathbf{x})>\int_{\Omega_4}(\lambda^{N-1}(\mathcal{A}(\bm{\omega})\mathbf{x}^{N-1})_i+(\mathbf{q}(\bm{\omega}))_i)^2dF(\bm{\omega})\rightarrow\infty\text{ as }\lambda\rightarrow\infty.
\end{equation*}

Since $\mathbf{x}$ is an arbitrary nonzero vector such that $\mathbf{x}\in\mathbb{R}^I_+$, we deduce from the above arguments that $G(\lambda\mathbf{x})\rightarrow\infty$ as $\|\mathbf{x}\|_2\rightarrow\infty$ with $\mathbf{x}\in\mathbb{R}^I_+$, provided that the statement (ii) in Theorem \ref{STCP:theorem2} holds.

Let us turn to proving the ``only if'' part. Suppose that $\mathcal{A}(\cdot)$ is not a stochastic $R_0$ tensor, i.e., there exists nonzero $\mathbf{x}_0\in\mathbb{R}^I_+$ such that  $(\mathcal{A}(\bm{\omega})\mathbf{x}_0^{N-1})_i=0$ for $i\in\mathbb{J}(\mathbf{x}_0)$ and $(\mathcal{A}(\bm{\omega})\mathbf{x}_0^{N-1})_i\geq0$ for $i\in\mathbb{I}(\mathbf{x}_0)$, a.e. For any $\lambda>0$, from (\ref{STCP:equation4}) and (\ref{STCP:equation3}), we have
\begin{eqnarray}\label{STCP:equation5}
G(\lambda\mathbf{x})&=&\frac{1}{2}\sum_{i=1}^I\mathbb{E}\left\{(1-{\rm sign}((\lambda^{N-1}\mathcal{A}(\bm{\omega})\mathbf{x}^{N-1})_i +(\mathbf{q}(\bm{\omega}))_i-\lambda x_i))
((\lambda^{N-1}\mathcal{A}(\bm{\omega})\mathbf{x}^{N-1})_i\right.\nonumber\\
&+&\left (\mathbf{q}(\bm{\omega}))_i-\lambda x_i)^2
+
(1-{\rm sign}((\lambda^{N-1}\mathcal{A}(\bm{\omega})\mathbf{x}^{N-1})_i
+(\mathbf{q}(\bm{\omega}))_i-\lambda x_i))(\lambda x_i)^2\right\}.
\end{eqnarray}
The $i$th term of the right-hand side of (\ref{STCP:equation5}) with $x_i\neq0$ equals to
\begin{eqnarray*}
&&\mathbb{E}\{(1-{\rm sign}((\mathbf{q}(\bm{\omega}))_i-\lambda x_i))(\mathbf{q}(\bm{\omega}))_i^2+(1-{\rm sign}((\mathbf{q}(\bm{\omega}))_i-\lambda x_i))(\lambda x_i)^2\}\\
&=&2\mathbb{E}\{(\mathbf{q}(\bm{\omega}))_i^21_{\{(\mathbf{q}(\bm{\omega}))_i\leq\lambda x_i\}}
+(\lambda x_i)^21_{\{(\mathbf{q}(\bm{\omega}))_i>\lambda x_i\}}\}\leq2\mathbb{E}\{(\mathbf{q}(\bm{\omega}))_i^2\},
\end{eqnarray*}
while the $i$th term of the right-hand side of (\ref{STCP:equation5}) with $x_i=0$ is
\begin{eqnarray*}
&&\mathbb{E}\{(1-{\rm sign}(\lambda^{N-1}(\mathcal{A}(\bm{\omega})\mathbf{x}^{N-1})_i+(\mathbf{q}(\bm{\omega}))_i))
(\lambda^{N-1}(\mathcal{A}(\bm{\omega})\mathbf{x}^{N-1})_i+(\mathbf{q}(\bm{\omega}))_i)^2\}\\
&=&2\mathbb{E}\{(\lambda^{N-1}(\mathcal{A}(\bm{\omega})\mathbf{x}^{N-1})_i+(\mathbf{q}(\bm{\omega}))_i)^2
1_{\{\lambda^{N-1}(\mathcal{A}(\bm{\omega})\mathbf{x}^{N-1})_i<-(\mathbf{q}(\bm{\omega}))_i\}}\}
\leq2\mathbb{E}\{(\mathbf{q}(\bm{\omega}))_i^2\},
\end{eqnarray*}
where the last inequality follows from $0>\lambda^{N-1}(\mathcal{A}(\bm{\omega})\mathbf{x}^{N-1})_i+(\mathbf{q}(\bm{\omega}))_i\geq(\mathbf{q}(\bm{\omega}))_i$, implying $$(\lambda^{N-1}(\mathcal{A}(\bm{\omega})\mathbf{x}^{N-1})_i+(\mathbf{q}(\bm{\omega}))_i)^2\geq(\mathbf{q}(\bm{\omega}))_i^2.$$ So, we have
$G(\lambda\mathbf{x})\leq2\mathbb{E}\{(\mathbf{q}(\bm{\omega}))_i^2\}$
for any $\lambda>0$.

Since $\mathbf{x}\in\mathbb{R}^I$ is nonzero, this particularly implies that $G$ is bounded above on a nonnegative ray in $\mathbb{R}^I$. This completes the proof of the ``only if'' part.
\end{proof}
The solution set of ${\rm ERM}(\mathcal{A}(\cdot),\mathbf{q}(\cdot))$ may be bounded, even if $\mathcal{A}(\cdot)$ is not a stochastic $R_0$ tensor. It depends on the distribution of $\mathbf{q}(\cdot)$, as shown in the following two propositions.
\begin{proposition}
If $\mathcal{A}(\cdot)$ is not a stochastic $R_0$ tensor, $\mathbb{P}\{\bm{\omega}:(\mathbf{q}({\bm \omega}))_i>0\}>0$ for some $i\in\mathbb{J}(\mathbf{x})$, and $\mathbb{P}\{\bm{\omega}:(\mathbf{q}({\bm \omega}))_i\geq0\}=1$ for some $i\in\mathbb{I}(\mathbf{x})$, where $\mathbf{x}\neq\mathbf{0}_I$ is any nonnegative vector at which the conditions {\rm (a)} and {\rm (b)} in Theorem {\rm \ref{STCP:theorem2}} {\rm (ii)} fail to hold, then the solution set of ${\rm ERM}(\mathcal{A}(\cdot),\mathbf{q}(\cdot))$ is bounded.
\end{proposition}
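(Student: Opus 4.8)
The plan is to argue by contradiction, assuming the solution set $S$ of ${\rm ERM}(\mathcal{A}(\cdot),\mathbf{q}(\cdot))$ is unbounded, and to extract a recession direction that must violate the hypotheses. Write $G_*=\inf_{\mathbf{x}\in\mathbb{R}^I_+}G(\mathbf{x})$. If $S$ is unbounded, choose global solutions $\mathbf{x}_k\in S$ with $\|\mathbf{x}_k\|_2\to\infty$ and $G(\mathbf{x}_k)=G_*<\infty$ for all $k$. Normalizing, I would pass to a subsequence with $\mathbf{v}_k:=\mathbf{x}_k/\|\mathbf{x}_k\|_2\to\mathbf{v}$, where $\mathbf{v}\in\mathbb{R}^I_+$ and $\|\mathbf{v}\|_2=1$, and (by a further diagonal extraction) with each coordinate $x_{k,i}$ converging in $[0,+\infty]$. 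The whole argument then reduces to analyzing the single limiting direction $\mathbf{v}$.

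First I would show that $\mathbf{v}$ must be a \emph{bad} direction, i.e. that both conditions (a) and (b) of Theorem \ref{STCP:theorem2}(ii) fail at $\mathbf{v}$. This is the contrapositive of the ``if'' part of Theorem \ref{STCP:theorem3}: if (a) or (b) held at $\mathbf{v}$, then the estimates used there (choosing an index $i$, restricting to a positive-probability event on which the relevant residual is bounded away from zero, and truncating to $|(\mathbf{q}(\bm{\omega}))_i|\le K$) would force $G(\mathbf{x}_k)\to\infty$, contradicting $G(\mathbf{x}_k)=G_*$. The subtlety is that $\mathbf{x}_k$ does not lie on the ray through $\mathbf{v}$; I would transfer each positive-probability event from $\mathbf{v}$ to $\mathbf{v}_k$ via the continuity of $\mathcal{A}(\cdot)$ and dominated convergence, so that $\mathbb{P}\{(\mathcal{A}(\bm{\omega})\mathbf{v}_k^{N-1})_i<-\delta/2\}\to\mathbb{P}\{(\mathcal{A}(\bm{\omega})\mathbf{v}^{N-1})_i<-\delta\}>0$, and then use $(\mathcal{A}(\bm{\omega})\mathbf{x}_k^{N-1})_i=\|\mathbf{x}_k\|_2^{N-1}(\mathcal{A}(\bm{\omega})\mathbf{v}_k^{N-1})_i$ to blow the chosen term up.

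Once $\mathbf{v}$ is known to be bad, I would invoke the hypothesis at $\mathbf{v}$: there are $i_0\in\mathbb{J}(\mathbf{v})$ with $\mathbb{P}\{(\mathbf{q}(\bm{\omega}))_{i_0}>0\}>0$ and $i_1\in\mathbb{I}(\mathbf{v})$ with $\mathbb{P}\{(\mathbf{q}(\bm{\omega}))_{i_1}\ge0\}=1$, while, because $\mathbf{v}$ is bad, $(\mathcal{A}(\bm{\omega})\mathbf{v}^{N-1})_i=0$ a.e. for $i\in\mathbb{J}(\mathbf{v})$ and $(\mathcal{A}(\bm{\omega})\mathbf{v}^{N-1})_i\ge0$ a.e. for $i\in\mathbb{I}(\mathbf{v})$. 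Using formula (\ref{STCP:equation3}) coordinatewise, the plan is to produce a \emph{bounded} competitor $\mathbf{y}$ with $G(\mathbf{y})<G_*$, contradicting minimality. The gain comes from the $i_0$-term: since $x_{k,i_0}\to+\infty$ and $(\mathbf{q}(\bm{\omega}))_{i_0}>0$ on a set of positive probability, the limiting $i_0$-contribution along the escaping sequence cannot be pushed below the strictly positive quantity $\mathbb{E}\{(\mathbf{q}(\bm{\omega}))_{i_0}^2\mathbf{1}_{\{(\mathbf{q}(\bm{\omega}))_{i_0}>0\}}\}$ that a finite point can avoid, whereas condition involving $i_1$ guarantees that the only $\mathbb{I}(\mathbf{v})$-penalty a retraction could reintroduce stays identically zero, since $(\mathbf{q}(\bm{\omega}))_{i_1}\ge0$ makes the corresponding $\min(\cdot,0)$ vanish.

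The main obstacle is the joint passage to the limit in the third step: the direction $\mathbf{v}_k$ converges while the magnitude $\|\mathbf{x}_k\|_2$ diverges, so for $i\in\mathbb{J}(\mathbf{v})$ the degree-$(N-1)$ quantity $\|\mathbf{x}_k\|_2^{N-1}(\mathcal{A}(\bm{\omega})\mathbf{v}_k^{N-1})_i$ is of the indeterminate form $\infty\cdot0$ (because $(\mathcal{A}(\bm{\omega})\mathbf{v}^{N-1})_i=0$), and the coordinates are coupled through the contraction $\mathcal{A}(\bm{\omega})\mathbf{x}^{N-1}$, so altering one component of the competitor perturbs every residual. Handling this rigorously requires splitting $\mathbb{I}(\mathbf{v})$ according to whether $x_{k,i}$ stays bounded or diverges, truncating $\mathbf{q}$ on $\{|(\mathbf{q}(\bm{\omega}))_i|\le K\}$, and applying Fatou's lemma and dominated convergence to the individual coordinate integrals. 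This is precisely where the two distribution hypotheses on $\mathbf{q}(\cdot)$ are genuinely needed: the first to create a strictly positive, irreducible residual in a $\mathbb{J}(\mathbf{v})$-coordinate, and the second to neutralize the $\mathbb{I}(\mathbf{v})$-coordinate that the retraction would otherwise activate.
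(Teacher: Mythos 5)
Your proposal is built on the same underlying comparison as the paper's proof: on ``good'' directions (where (a) or (b) of Theorem \ref{STCP:theorem2}(ii) holds) the objective blows up by the argument of Theorem \ref{STCP:theorem3}, while on ``bad'' directions the limiting value of $G$ is $\sum_{i\in\mathbb{J}(\mathbf{x})}\mathbb{E}\{(\mathbf{q}(\bm{\omega}))_i^2\}$, which the two distributional hypotheses on $\mathbf{q}(\cdot)$ force to exceed $G(\mathbf{0})=\sum_i\mathbb{E}\{(\mathbf{q}(\bm{\omega}))_i^2 1_{\{(\mathbf{q}(\bm{\omega}))_i<0\}}\}$; you use the two hypotheses in exactly the roles the paper assigns them in (\ref{STCP:equation6})--(\ref{STCP:equation8}). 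The difference is packaging: the paper argues directly along rays $\lambda\mathbf{x}$ and compares with the level set $\Lambda=\{\mathbf{x}:G(\mathbf{x})\le G(\mathbf{0})\}$, whereas you set up a contradiction with a normalized sequence of unbounded solutions $\mathbf{x}_k$ and a limiting direction $\mathbf{v}$.

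That repackaging is where the genuine gap sits, and you have named it yourself without closing it. Along an exact ray with $\mathbf{x}$ bad, $(\mathcal{A}(\bm{\omega})(\lambda\mathbf{x})^{N-1})_i=\lambda^{N-1}(\mathcal{A}(\bm{\omega})\mathbf{x}^{N-1})_i$ is identically zero a.e.\ for $i\in\mathbb{J}(\mathbf{x})$, so the $i_0$-residual is exactly $(\mathbf{q}(\bm{\omega}))_{i_0}-\lambda x_{i_0}$ and the limit computation (\ref{STCP:equation7}) is clean. Along your sequence, $(\mathcal{A}(\bm{\omega})\mathbf{x}_k^{N-1})_{i_0}=\|\mathbf{x}_k\|_2^{N-1}(\mathcal{A}(\bm{\omega})\mathbf{v}_k^{N-1})_{i_0}$ is of the form $\infty\cdot 0$ and its limit is an essentially arbitrary measurable function of $\bm{\omega}$; in particular nothing rules out that it converges to $-(\mathbf{q}(\bm{\omega}))_{i_0}$ on the event $\{(\mathbf{q}(\bm{\omega}))_{i_0}>0\}$, annihilating precisely the ``irreducible residual'' $\mathbb{E}\{(\mathbf{q}(\bm{\omega}))_{i_0}^2 1_{\{(\mathbf{q}(\bm{\omega}))_{i_0}>0\}}\}$ on which your contradiction rests. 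Fatou and dominated convergence cannot recover this, because the pointwise liminf of the $i_0$-residual can genuinely be zero; you would need an additional structural argument (or to retreat to the paper's ray-wise formulation, accepting that it only establishes behavior along rays). Separately, note that your reading of the hypothesis as ``there exists $i_1\in\mathbb{I}(\mathbf{v})$ with $\mathbb{P}\{(\mathbf{q}(\bm{\omega}))_{i_1}\ge 0\}=1$'' matches the statement's wording but not its use: the paper's computation (\ref{STCP:equation6}) discards \emph{all} $\mathbb{I}(\mathbf{x})$-terms and therefore needs the sign condition for every $i\in\mathbb{I}(\mathbf{x})$, so you should either assume it for all such $i$ or bound the remaining $\mathbb{I}$-contributions by their counterparts in $G(\mathbf{0})$.
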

\begin{proof}
Note that
\begin{equation}\label{STCP:equation9}
G(\mathbf{0})=\mathbb{E}\{\|\Phi(\mathbf{0},\bm{\omega})\|_2^2\}=\sum_{i=1}^I\mathbb{E}\{(\mathbf{q}(\bm{\omega}))_i^2
1_{\{(\mathbf{q}(\bm{\omega}))_i<0\}}\}.
\end{equation}

For any nonzero $\mathbf{x}\in\mathbb{R}^I_+$ satisfying the conditions (a) and (b) in Theorem \ref{STCP:theorem2} (ii), the proof of Theorem \ref{STCP:theorem3} indicates that
\begin{equation}\label{STCP:equation12}
G(\lambda\mathbf{x})\rightarrow\infty \text{ as } \|\mathbf{x}\|_2\rightarrow\infty.
\end{equation}

Let $\mathbf{x}\in\mathbb{R}^I_+$ be nonzero which does not satisfy the conditions (a) and (b) in Theorem \ref{STCP:theorem2} (ii), i.e., $(\mathcal{A}(\bm{\omega})\mathbf{x}_0^{N-1})_i=0$ for $i\in\mathbb{J}(\mathbf{x}_0)$ and $(\mathcal{A}(\bm{\omega})\mathbf{x}_0^{N-1})_i\geq0$ for $i\in\mathbb{I}(\mathbf{x}_0)$, a.e. Then by (\ref{STCP:equation3}), we have
\begin{eqnarray}\label{STCP:equation6}
G(\lambda\mathbf{x})&=&\frac{1}{2}\sum_{i\in\mathbb{J}(\mathbf{x})}\mathbb{E}\left\{(1-{\rm sign}((\mathbf{q}(\bm{\omega}))_i-\lambda x_i))
((\mathbf{q}(\bm{\omega}))_i-\lambda x_i)^2+(1-{\rm sign}((\mathbf{q}(\bm{\omega}))_i-\lambda x_i))(\lambda x_i)^2\right\}\nonumber\\
&=&\sum_{i\in\mathbb{J}(\mathbf{x})}\{\mathbb{E}\{(\mathbf{q}(\bm{\omega}))_i^2\}
-\mathbb{E}\{1_{\{(\mathbf{q}(\bm{\omega}))_i-\lambda x_i>0\}}[(\mathbf{q}(\bm{\omega}))_i^2-(\lambda x_i)^2]\}\},
\end{eqnarray}
where the fist equality follows from the assumption that $\mathbb{P}\{\bm{\omega}:(\mathbf{q}(\bm{\omega}))_i\geq0\}=1$ for $i\in\mathbb{I}(\mathbf{x})$ and hence $(\mathcal{A}(\bm{\omega})\mathbf{x}_0^{N-1})_i+(\mathbf{q}(\bm{\omega}))_i\geq0$, a.e., for $i\in\mathbb{I}(\mathbf{x})$. Note that
\begin{equation*}
\begin{split}
0&\leq\mathbb{E}\{1_{\{(\mathbf{q}(\bm{\omega}))_i-\lambda x_i>0\}}[(\mathbf{q}(\bm{\omega}))_i^2-(\lambda x_i)^2]\}
=\mathbb{E}\{1_{\{(\mathbf{q}(\bm{\omega}))_i>\lambda x_i\}}[(\mathbf{q}(\bm{\omega}))_i^2-(\lambda x_i)^2]\}\\
&\leq\mathbb{E}\{1_{\{(\mathbf{q}(\bm{\omega}))_i>\lambda x_i\}}(\mathbf{q}(\bm{\omega}))_i^2\}\rightarrow\infty\text{ as }\lambda\rightarrow\infty,
\end{split}
\end{equation*}
which together with (\ref{STCP:equation6}) implies
\begin{equation}\label{STCP:equation7}
\lim_{\lambda\rightarrow\infty}G(\lambda\mathbf{x})=\sum_{i\in\mathbb{J}(\mathbf{x})}
\mathbb{E}\{(\mathbf{q}(\bm{\omega}))_i^2\}.
\end{equation}
On the other hand, for any nonzero $\mathbf{x}\in\mathbb{R}^{I}_+$, we have
\begin{equation}\label{STCP:equation8}
\sum_{i=1}^I\mathbb{E}\{(\mathbf{q}(\bm{\omega}))_i^21_{\{(\mathbf{q}(\bm{\omega}))_i<0\}}\}
=\sum_{i\in\mathbb{J}(\mathbf{x})}\mathbb{E}\{(\mathbf{q}(\bm{\omega}))_i^21_{\{(\mathbf{q}(\bm{\omega}))_i<0\}}\}
<\sum_{i\in\mathbb{J}(\mathbf{x})}
\mathbb{E}\{(\mathbf{q}(\bm{\omega}))_i^2\},
\end{equation}
where the equality follows from the assumption that $\mathbb{P}\{\bm{\omega}:(\mathbf{q}(\bm{\omega}))_i\geq0\}=1$ for $i\in\mathbb{I}(\mathbf{x})$ and the inequality follows from the assumption that $\mathbb{P}\{\bm{\omega}:(\mathbf{q}(\bm{\omega}))_i>0\}>0$ for $i\in\mathbb{J}(\mathbf{x})$. Combining (\ref{STCP:equation9}), (\ref{STCP:equation7}) and (\ref{STCP:equation8}), we have
\begin{equation}\label{STCP:equation13}
G(\mathbf{0})<\lim_{\lambda\rightarrow\infty}G(\lambda\mathbf{x}).
\end{equation}
Let $\Lambda=\{\mathbf{x}\in\mathbb{R}^I_+:G(\mathbf{x})\leq G(\mathbf{0})\}$. From (\ref{STCP:equation12}) and (\ref{STCP:equation13}), we have $\sup_{\mathbf{x}\in\Lambda}\|\mathbf{x}\|_2<+\infty$. Since any solution belongs to $\Lambda$, this implies that the solution set if bounded.
\end{proof}
\begin{proposition}
If $\mathcal{A}(\cdot)$ is not a stochastic $R_0$ tensor and, for any $i$, $\mathbb{P}\{\bm{\omega}:-b\leq (\mathbf{q}(\bm{\omega}))_i<0\}=1$ for some $b>0$, and $\mathbb{P}\{\bm{\omega}:(\mathbf{q}(\bm{\omega}))_i\neq0\text{ and }(\mathcal{A}(\bm{\omega})\widetilde{\mathbf{x}}^{N-1})_i=0\}=1$, where $\widetilde{\mathbf{x}}\neq\mathbf{0}_I$ is any nonnegative vector at which the conditions {\rm (a)} and {\rm (b)} in Theorem {\rm \ref{STCP:theorem2}} {\rm (ii)} fail to hold, then the solution set of ${\rm ERM}(\mathcal{A}(\cdot),\mathbf{q}(\cdot))$ is bounded.
\end{proposition}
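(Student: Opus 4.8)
The plan is to follow the template of the preceding proposition: fix a threshold $\gamma$ and show that the sublevel set $\Lambda=\{\mathbf{x}\in\mathbb{R}^I_+:G(\mathbf{x})\le\gamma\}$ is bounded and contains every global minimizer of $G$. First I would record the value at the origin. Since $-b\le(\mathbf{q}(\bm{\omega}))_i<0$ a.e. for every $i$, we have $\min((\mathbf{q}(\bm{\omega}))_i,0)=(\mathbf{q}(\bm{\omega}))_i$, so
\[
G(\mathbf{0})=\sum_{i=1}^I\mathbb{E}\{(\mathbf{q}(\bm{\omega}))_i^2\}.
\]
Because $\mathbf{0}\in\mathbb{R}^I_+$, every global minimizer $\mathbf{x}_*$ satisfies $G(\mathbf{x}_*)\le G(\mathbf{0})$, so it suffices to understand the behaviour of $G$ along rays $\lambda\mathbf{x}$ with $\|\mathbf{x}\|_2=1$, $\mathbf{x}\ge\mathbf{0}_I$, as $\lambda\to\infty$.

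Next I would split the unit directions into the \emph{good} ones (those at which condition (a) or (b) of Theorem \ref{STCP:theorem2} (ii) holds) and the \emph{bad} ones (those where both fail). For a good direction, the argument already carried out in the ``if'' part of Theorem \ref{STCP:theorem3} applies and gives $G(\lambda\mathbf{x})\to\infty$ as $\lambda\to\infty$. For a bad direction $\widetilde{\mathbf{x}}$, the hypothesis forces $\mathbb{P}\{(\mathcal{A}(\bm{\omega})\widetilde{\mathbf{x}}^{N-1})_i=0\}=1$ for every $i$, hence $\mathcal{A}(\bm{\omega})(\lambda\widetilde{\mathbf{x}})^{N-1}=\mathbf{0}_I$ a.e.; combining this with $(\mathbf{q}(\bm{\omega}))_i<0<\lambda\widetilde{x}_i$ for $i\in\mathbb{J}(\widetilde{\mathbf{x}})$ and with $x_i=0$ for $i\in\mathbb{I}(\widetilde{\mathbf{x}})$, each diagonal sign in $\mathbf{D}(\lambda\widetilde{\mathbf{x}},\bm{\omega})$ equals $-1$ and the representation (\ref{STCP:equation3}) collapses term by term to $(\mathbf{q}(\bm{\omega}))_i^2$, whence
\[
G(\lambda\widetilde{\mathbf{x}})=\sum_{i=1}^I\mathbb{E}\{(\mathbf{q}(\bm{\omega}))_i^2\}=G(\mathbf{0})\qquad\text{for every }\lambda>0.
\]

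This last identity is exactly where the proof departs from the previous one and where the main obstacle lies. In the preceding proposition the positivity of $\mathbf{q}$ produced a \emph{strict} gap $G(\mathbf{0})<\lim_{\lambda\to\infty}G(\lambda\widetilde{\mathbf{x}})$, which immediately bounded the sublevel set $\{G\le G(\mathbf{0})\}$; here $G$ is constant and equal to $G(\mathbf{0})$ along every bad ray, so thresholding at $G(\mathbf{0})$ can no longer exclude those rays. The decisive step therefore has to be the strict inequality $\inf_{\mathbb{R}^I_+}G<G(\mathbf{0})$: once a point $\mathbf{x}^\sharp$ with $G(\mathbf{x}^\sharp)<G(\mathbf{0})$ is exhibited, one chooses $\gamma\in(\inf G,\,G(\mathbf{0}))$, and the sublevel set $\Lambda=\{G\le\gamma\}$ then meets neither the bad rays (on which $G\equiv G(\mathbf{0})>\gamma$) nor the far field of the good directions (on which $G\to\infty$); a compactness argument over the unit sphere intersected with $\mathbb{R}^I_+$ upgrades these pointwise facts to $\sup_{\mathbf{x}\in\Lambda}\|\mathbf{x}\|_2<\infty$, and since every minimizer lies in $\Lambda$ the solution set is bounded. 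Producing $\mathbf{x}^\sharp$ is the hard part: the natural device is to perturb a bad-ray point so as to push some residual $(\mathcal{A}(\bm{\omega})\mathbf{x}^{N-1})_i+(\mathbf{q}(\bm{\omega}))_i$ up to a nonnegative value on a set of positive probability while keeping $x_i=0$, which zeroes the corresponding NCP component and strictly lowers $G$, the two-sided bound $-b\le(\mathbf{q}(\bm{\omega}))_i<0$ being what makes such a finite perturbation suffice. I expect this construction to require genuine use of the local structure of $\mathcal{A}$ near $\widetilde{\mathbf{x}}$, and it is the step most likely to need an additional nondegeneracy assumption on $\mathcal{A}$ to go through.
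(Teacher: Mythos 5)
Your computation is correct as far as it goes, and the obstruction you isolated is real --- in fact it is fatal to the statement as printed, so the step you could not supply cannot be supplied. Under the literal hypotheses, every ``bad'' direction $\widetilde{\mathbf{x}}$ satisfies $\mathcal{A}(\bm{\omega})\widetilde{\mathbf{x}}^{N-1}=\mathbf{0}_I$ a.e.\ (failure of (a) and (b) combined with $\mathbb{P}\{(\mathbf{q}(\bm{\omega}))_i\neq0,\ (\mathcal{A}(\bm{\omega})\widetilde{\mathbf{x}}^{N-1})_i=0\}=1$), and $(\mathbf{q}(\bm{\omega}))_i<0$ a.e.\ then gives $(\Phi(\lambda\widetilde{\mathbf{x}},\bm{\omega}))_i=(\mathbf{q}(\bm{\omega}))_i$ for every $i$ and every $\lambda\geq0$, hence $G(\lambda\widetilde{\mathbf{x}})\equiv G(\mathbf{0}_I)$, exactly as you found. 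Consequently, if any point of such a ray is a global minimizer, the entire ray consists of global minimizers and the solution set is unbounded. This actually happens: take $\mathcal{A}(\bm{\omega})$ the zero tensor and $(\mathbf{q}(\bm{\omega}))_i\equiv-1$; all hypotheses hold, $G$ is constant on $\mathbb{R}^I_+$, and the solution set is all of $\mathbb{R}^I_+$. So no additional construction (and no nondegeneracy of $\mathcal{A}$ hidden in the stated hypotheses) can produce the point $\mathbf{x}^\sharp$ with $G(\mathbf{x}^\sharp)<G(\mathbf{0}_I)$ that your plan requires.

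The comparison with the paper resolves the puzzle: the paper's own proof of this proposition ends with the words ``the level set $\Lambda_\gamma$ is unbounded, which means the solution set is unbounded if it is not empty.'' The proposition is the companion of the preceding one --- the two are introduced as showing that boundedness ``depends on the distribution of $\mathbf{q}(\cdot)$'' --- and its intended conclusion is unboundedness; the word ``bounded'' in the statement is a misprint. The hypotheses are also garbled relative to what the proof actually uses: the argument in the paper needs $(\mathbf{q}(\bm{\omega}))_i=0$ a.e.\ for $i\in\mathbb{J}(\widetilde{\mathbf{x}})$ and $\mathbb{P}\{(\mathbf{q}(\bm{\omega}))_i\neq0,\ (\mathcal{A}(\bm{\omega})\widetilde{\mathbf{x}}^{N-1})_i=0\}=0$ (not $=1$) for $i\in\mathbb{I}(\widetilde{\mathbf{x}})$, under which it derives $\lim_{\lambda\to\infty}G(\lambda\widetilde{\mathbf{x}})=0$ while $G\geq0$ everywhere, whence every level set $\{\mathbf{x}\in\mathbb{R}^I_+:G(\mathbf{x})\leq\gamma\}$ with $\gamma>0$ is unbounded. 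Your diagnosis of where the previous proposition's argument breaks down is exactly right; the honest conclusion is that the printed statement is not provable because it is false, and what the paper in fact establishes is the opposite conclusion under corrected hypotheses.
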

\begin{proof}
Let $\widetilde{\mathbf{x}}\in\mathbb{R}^I_+$ be nonzero which does not satisfy the conditions (a) and (b) in Theorem \ref{STCP:theorem2} (ii). From (\ref{STCP:equation4}) and (\ref{STCP:equation3}), we have
\begin{eqnarray}\label{STCP:equation10}
G(\lambda\widetilde{\mathbf{x}})&=&\frac{1}{2}\sum_{i=1}^I\mathbb{E}\left\{(1-{\rm sign}((\lambda^{N-1}\mathcal{A}(\bm{\omega})\widetilde{\mathbf{x}}^{N-1})_i+(\mathbf{q}(\bm{\omega}))_i-\lambda \widetilde{x}_i))
((\lambda^{N-1}\mathcal{A}(\bm{\omega})\widetilde{\mathbf{x}}^{N-1})_i\right.\nonumber\\
&+&\left.(\mathbf{q}(\bm{\omega}))_i-\lambda \widetilde{x}_i)^2
+(1-{\rm sign}((\lambda^{N-1}\mathcal{A}(\bm{\omega})\widetilde{\mathbf{x}}^{N-1})_i+(\mathbf{q}(\bm{\omega}))_i-\lambda \widetilde{x}_i))(\lambda \widetilde{x}_i)^2\right\}.
\end{eqnarray}
For every $i\in\mathbb{J}(\widetilde{\mathbf{x}})$, we have $(\mathcal{A}(\bm{\omega})\widetilde{\mathbf{x}}^{N-1})_i=0$ and $q_i(\bm{\omega})=0$, a.e., and hence the $i$th term of the right-hand side of (\ref{STCP:equation10}) is zero for any $\lambda>0$. For every $i\in\mathbb{I}(\widetilde{\mathbf{x}})$, we have $(\mathcal{A}(\bm{\omega})\widetilde{\mathbf{x}}^{N-1})_i\geq0$ and $q_i(\bm{\omega})<0$, a.e., which implies
\begin{equation}\label{STCP:equation11}
\begin{split}
\mathbb{E}\{(1&-{\rm sign}(\lambda^{N-1}(\mathcal{A}(\bm{\omega})\widetilde{\mathbf{x}}^{N-1})_i+(\mathbf{q}(\bm{\omega}))_i))
(\lambda^{N-1}(\mathcal{A}(\bm{\omega})\widetilde{\mathbf{x}}^{N-1})_i+(\mathbf{q}(\bm{\omega}))_i)^2\}\\
=&2\mathbb{E}\{(\lambda(\mathcal{A}(\bm{\omega})\widetilde{\mathbf{x}}^{N-1})_i+(\mathbf{q}(\bm{\omega}))_i)^2
1_{\{\lambda^{N-1}(\mathcal{A}(\bm{\omega})\widetilde{\mathbf{x}}^{N-1})_i<-(\mathbf{q}(\bm{\omega}))_i,
(\mathcal{A}(\bm{\omega})\widetilde{\mathbf{x}}^{N-1})_i>0\}}\}\\
&\quad+2\mathbb{E}\{(\mathbf{q}(\bm{\omega}))_i^21_{(\mathcal{A}(\bm{\omega})\widetilde{\mathbf{x}}^{N-1})_i=0}\}.
\end{split}
\end{equation}
By assumption, the second term on the right-hand side of (\ref{STCP:equation11}) is zero for any $\lambda>0$, and
\begin{equation*}
\begin{split}
\mathbb{E}\{&(\lambda^{N-1}(\mathcal{A}(\bm{\omega})\widetilde{\mathbf{x}}^{N-1})_i+(\mathbf{q}(\bm{\omega}))_i)^2
1_{\{\lambda^{N-1}(\mathcal{A}(\bm{\omega})\widetilde{\mathbf{x}}^{N-1})_i<-(\mathbf{q}(\bm{\omega}))_i,
(\mathcal{A}(\bm{\omega})\widetilde{\mathbf{x}}^{N-1})_i>0\}}\}\\
&\leq b^2\mathbb{P}\{\bm{\omega}:0<\lambda^{N-1}(\mathcal{A}(\bm{\omega})\widetilde{\mathbf{x}}^{N-1})_i<b\}\rightarrow\infty
\text{ as }\lambda\rightarrow\infty.
\end{split}
\end{equation*}
Therefore, we obtain
\begin{equation*}
\lim_{\lambda\rightarrow\infty}G(\lambda\widetilde{\mathbf{x}})=0,
\end{equation*}
but for any $\mathbf{x}\in\mathbb{R}^I_+$, $G(\mathbf{x})\geq0$. So for any $\gamma>0$, the level set $\Lambda_\gamma:=\{\mathbf{x}\in\mathbb{R}^I_+:G(\mathbf{x})\leq\gamma\}$ is unbounded, which means the solution set is unbounded if it is not empty.
\end{proof}
From Theorem \ref{STCP:theorem3}, we have the following necessary and sufficient condition for the solution set of ${\rm ERM}(\mathcal{A}(\cdot),\mathbf{q}(\cdot))$ to be bounded for any $\mathbf{q}(\cdot)$.
\begin{theorem}
The solution set of ${\rm ERM}(\mathcal{A}(\cdot),\mathbf{q}(\cdot))$ is nonempty and bounded for any $\mathbf{q}(\cdot)$ if and only if $\mathcal{A}(\cdot)$ is a stochastic $R_0$ tensor.
\end{theorem}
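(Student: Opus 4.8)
The plan is to derive this theorem almost entirely from Theorem \ref{STCP:theorem3}, together with Theorem \ref{STCP:theorem2}, treating it as the culmination of the coercivity analysis already carried out rather than as a fresh computation. For the ``if'' direction I would first record that $G$ is continuous, finite-valued and nonnegative on the closed set $\mathbb{R}^I_+$, which follows from the continuity of the ``min'' NCP function together with the moment hypotheses on $\mathcal{A}(\cdot)$ and $\mathbf{q}(\cdot)$. Assuming $\mathcal{A}(\cdot)$ is a stochastic $R_0$ tensor, Theorem \ref{STCP:theorem3} supplies, for every $\mathbf{q}(\cdot)$, the coercivity property $G(\mathbf{x})\rightarrow\infty$ as $\|\mathbf{x}\|_2\rightarrow\infty$ with $\mathbf{x}\in\mathbb{R}^I_+$. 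A standard Weierstrass argument then shows that $G$ attains its infimum over $\mathbb{R}^I_+$, so the solution set is nonempty; and coercivity forces every sublevel set $\{\mathbf{x}\in\mathbb{R}^I_+:G(\mathbf{x})\le\gamma\}$ to be bounded, whence the solution set, being the sublevel set at the optimal value, is bounded.

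For the ``only if'' direction I would argue by contraposition and exhibit a single bad choice of $\mathbf{q}(\cdot)$. Suppose $\mathcal{A}(\cdot)$ is not a stochastic $R_0$ tensor and take $\mathbf{q}(\bm{\omega})\equiv\mathbf{0}_I$; then $G(\mathbf{0}_I)=0$ is the global minimum value since $G\ge0$. Failure of the stochastic $R_0$ property produces a nonzero $\mathbf{x}_0\in\mathbb{R}^I_+$ with $(\mathcal{A}(\bm{\omega})\mathbf{x}_0^{N-1})_i=0$ for $i\in\mathbb{J}(\mathbf{x}_0)$ and $(\mathcal{A}(\bm{\omega})\mathbf{x}_0^{N-1})_i\ge0$ for $i\in\mathbb{I}(\mathbf{x}_0)$, a.e. Because $\mathcal{A}(\bm{\omega})(\lambda\mathbf{x}_0)^{N-1}=\lambda^{N-1}\mathcal{A}(\bm{\omega})\mathbf{x}_0^{N-1}$ preserves these sign conditions while $\lambda(x_0)_i>0$ on $\mathbb{J}(\mathbf{x}_0)$, one checks that $\Phi(\lambda\mathbf{x}_0,\bm{\omega})=\mathbf{0}_I$ a.e., so $G(\lambda\mathbf{x}_0)=0$ for every $\lambda\ge0$; this is exactly the ray of global solutions constructed in the proof of ${\rm (iii)}\Rightarrow{\rm (ii)}$ in Theorem \ref{STCP:theorem2}. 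Hence the solution set of ${\rm ERM}(\mathcal{A}(\cdot),\mathbf{0}_I)$ contains the entire ray $\{\lambda\mathbf{x}_0:\lambda\ge0\}$ and is unbounded, contradicting the assumption that the solution set is nonempty and bounded for any $\mathbf{q}(\cdot)$. This establishes the contrapositive and hence the stated equivalence.

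I expect the genuine analytic difficulty to have been absorbed entirely into Theorem \ref{STCP:theorem3}, which is already available; the only residual care lies in the ``if'' direction, namely confirming that $G$ is continuous and finite so that the Weierstrass extremum principle applies on the unbounded-but-closed domain $\mathbb{R}^I_+$. This reduces to dominating $\|\Phi(\mathbf{x},\bm{\omega})\|_2^2$ by a quantity controlled by $\mathcal{A}(\bm{\omega})\mathbf{x}^{N-1}+\mathbf{q}(\bm{\omega})$ and $\mathbf{x}$ and invoking dominated convergence under the integrability hypotheses; the remainder is a routine assembly of the two earlier results.
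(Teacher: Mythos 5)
Your proposal is correct and matches the paper's intended derivation: the paper states this theorem without proof as an immediate consequence of Theorem \ref{STCP:theorem3}, and your argument supplies exactly the omitted details (coercivity of $G$ plus a Weierstrass argument on $\mathbb{R}^I_+$ for the ``if'' part, and the zero-cost ray $\{\lambda\mathbf{x}_0:\lambda\geq0\}$ of solutions of ${\rm ERM}(\mathcal{A}(\cdot),\mathbf{0}_I)$, as in Theorem \ref{STCP:theorem2}(iii), for the ``only if'' part). The only caveat---one you flag yourself and which is shared with the paper's own usage---is that finiteness and continuity of $G$ tacitly require second moments of $\mathbf{q}(\bm{\omega})$ and $\mathcal{A}(\bm{\omega})$ rather than only the first moments stated in Section \ref{STCP:section2}.
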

\section{Concluding remarks}\label{STCP:section5}
In this paper, we introduced the definition of $R_0$ tensors and stochastic $R_0$ tensors to investigate the solution set of the expected residual minimization problem of the stochastic tensor complementarity problem. As we know, in order to study the TCP, many classes of the structured tensors are investigated in the recent years, such as $P$ tensors  \cite{tcp5}, $Q$ tensors  \cite{tcp8}, copositive tensors  \cite{Copositive}, and strictly (semi-)positive tensors \cite{tcp8}.  Gowda \cite{pcp} established Karamardian type results for the polynomial complementarity problem (PCP), as a special case of the NCP and a generalization of the TCP. In \cite{pcp}, Gowda also introduced the definition of degree of an $R_0$ tensor and showed that the degree of an $R_0$ tensor is one.

Similar to the STCP, the stochastic polynomial complementarity problem (SPCP) is to find a nonzero $\mathbf{x}\in\mathbb{R}^I_+$ such that
\begin{equation*}
\sum\limits_{k=1}^{K}\mathcal{A}_{k}(\bm{\omega})\mathbf{x}^{k-1}+\mathbf{q}(\bm{\omega})\in\mathbb{R}^I_+,\quad \sum\limits_{k=1}^{K}\mathcal{A}_{k}(\bm{\omega})\mathbf{x}^{k}+\mathbf{x}^\top\mathbf{q}(\bm{\omega})=0,
\end{equation*}
where $\mathcal{A}_k(\bm{\omega})\in T_{k,I}$ and $\mathbf{q}(\bm{\omega})\in\mathbb{R}^{I}$ for $\bm{\omega}\in\Omega$ with $k=1,2,\dots,K$. When the sample-path approach \cite{SNCP_introduction2} is applied to the SPCP, the approach is the same as the expected value method, which uses the expected function of the random function $\sum_{k=1}^{K}\mathcal{A}_{k}(\bm{\omega})\mathbf{x}^{k-1}+\mathbf{q}(\bm{\omega})$ and solves the deterministic problem
\begin{equation*}
\mathbf{x}\in\mathbb{R}^I_+,\quad \mathbb{E}\left\{\sum\limits_{k=1}^{K}\mathcal{A}_{k}(\bm{\omega})\mathbf{x}^{k-1}+\mathbf{q}(\bm{\omega})\right\}
\in\mathbb{R}^I_+,\quad \mathbb{E}\left\{\sum\limits_{k=1}^{K}\mathcal{A}_{k}(\bm{\omega})\mathbf{x}^{k}+\mathbf{x}^\top\mathbf{q}(\bm{\omega})\right\}=0.
\end{equation*}

Another generalization of the STCP is the stochastic extended vertical tensor complementarity problem (SEVTCP), which is to find a vector $\mathbf{x}\in\mathbb{R}^I$ such that
\begin{equation}\label{STCP:eqn1}
\min\{\Psi_0(\mathbf{x}),\Psi_1(\mathbf{x}),\dots,
\Psi_K(\mathbf{x})\}=\mathbf{0}_I,
\end{equation}
with $$
\Psi_k(\mathbf{x})=\mathbb{E}\{\mathcal{A}_k(\bm{\omega})\}\mathbf{x}^{N-1}
+\mathbb{E}\{\mathbf{q}_k(\bm{\omega})\},\quad k=0,1,\dots,K,$$
where $\mathcal{A}_k(\cdot):\Omega\rightarrow T_{N,I}$ and $\mathbf{q}_k(\cdot):\Omega\rightarrow\mathbb{R}^I$ are random mappings.

When $N=2$, the SEVTCP reduces to the  stochastic extended vertical linear complementarity problem \cite{sevlcp}, which is a natural extension of deterministic extended vertical linear complementarity problem \cite{golcp}. When $K=1$, $\mathbb{E}\{\mathcal{A}_0(\bm{\omega})\}=\mathcal{I}$ and $\mathbb{E}\{\mathbf{q}_0(\bm{\omega})\}={\bf 0}_I$, SEVTCP (\ref{STCP:eqn1}) reduces to the stochastic tensor complementarity problem ${\rm STCP}(\mathcal{A}_1(\cdot),\mathbf{q}_1(\cdot))$.

In the following, we present some open questions for the given STCP and SPCP, which will be studied in the future.
\begin{enumerate}[(a)]
\item Can we define the stochastic $P$ tensors, the stochastic copositive tensors and the stochastic strictly (semi-)positive tensors?
\item When the associated tensor is one of the above tensors, can we prove the solution set of the expected residual minimization problem of the STCP is nonempty and bounded?
\item Is there a relationship between a given SPCP and the associated STCP? Which condition can ensure the solution set of the expected residual minimization problem of the SPCP is nonempty and bounded?
\item Huang and Qi \cite{tcp3} reformulated a multilinear game (a class of $I$-person noncooperative games \cite{howson1972equilibria}) as a tensor complementarity problem and showed that finding a Nash equilibrium point of the multilinear game is equivalent to finding a solution of the resulted tensor complementarity problem. Which models can be reformulated as a stochastic tensor complementarity problem?
\end{enumerate}

\section*{Acknowledgements}
The authors would like to thank the editor and two referees for their detailed comments
and suggestions.

{\small

}

\end{document}